\documentclass[leqno,12pt]{article}

\hyphenation{non-smooth}

\usepackage{empheq}
\usepackage[margin=0.9 in  ]{geometry}         
\geometry{letterpaper}

\usepackage{longtable}
\usepackage{graphicx}
\usepackage{caption,subcaption}
\usepackage{stmaryrd}
\usepackage{framed}
\usepackage{amssymb}
\usepackage{epstopdf}
\usepackage{amsmath,amsfonts,amssymb}
\usepackage{enumerate}
\usepackage{multirow}
\usepackage{fixltx2e}
\usepackage{mathpazo}

\usepackage{enumitem}
\usepackage{xcolor}
\usepackage{soul}

\usepackage[toc,page]{appendix}

\usepackage{float}
\usepackage{theorem}
\usepackage{array}
\usepackage{booktabs}

\leftmargini 18 mm 

\setlength\extrarowheight{5pt}

\DeclareGraphicsRule{.tif}{png}{.png}{`convert #1 `dirname #1`/`basename #1 .tif`.png}

\definecolor{myblue}{rgb}{.9, .9, 1}
  \newcommand*\mybluebox[1]{%
    \colorbox{myblue}{\hspace{1em}#1\hspace{1em}}}

\newcommand*\mywhitebox[1]{%
   \colorbox{white}{\hspace{1em}#1\hspace{1em}}}

\newcommand{\scal}[2]{\left\langle{#1},{#2}  \right\rangle}
\newcommand{\menge}[2]{\big\{{#1}~\big |~{#2}\big\}}

\newcommand{\kkk}{\ensuremath{{k\in{\mathbb N}}}}

\newcommand{\ran}{\ensuremath{\operatorname{ran}}}
\newcommand{\card}{\ensuremath{\operatorname{card}}}

%
%

\newtheorem{theorem}{Theorem}[section]
\newtheorem{lemma}[theorem]{Lemma}

\newtheorem{proposition}[theorem]{Proposition}
\newtheorem{definition}[theorem]{Definition}
\theoremstyle{plain}{\theorembodyfont{\rmfamily}
}
\theoremstyle{plain}{\theorembodyfont{\rmfamily}
}
\theoremstyle{plain}{\theorembodyfont{\rmfamily}
}
\theoremstyle{plain}{\theorembodyfont{\rmfamily}
\newtheorem{example}[theorem]{Example}}

\theoremstyle{plain}{\theorembodyfont{\rmfamily}
\newtheorem{remark}[theorem]{Remark}}
\def\proof{\noindent{\it Proof}. \ignorespaces}
\def\endproof{\ensuremath{\hfill \quad \blacksquare}}


\providecommand{\ve}{\varepsilon}

\providecommand{\bx}{\mathbf{x}}

\providecommand{\by}{\mathbf{y}}

\providecommand{\bX}{\mathbf{X}}

 \providecommand{\inte}{\operatorname{int}}
 \providecommand{\dom}{\operatorname{dom}}
 \providecommand{\sgn}{\operatorname{sgn}}
\providecommand{\conv}{\operatorname*{conv}}

  \providecommand{\prox}{\operatorname{P}}

\providecommand{\RR}{\mathbb{R}}
\providecommand{\RX}{\left]-\infty,+\infty\right]}
\providecommand{\RP}{\mathbb{R}_+}
\providecommand{\RM}{\mathbb{R}_-}
\providecommand{\RPP}{\mathbb{R}_{++}}
\providecommand{\RMM}{\mathbb{R}_{--}}

\providecommand{\NN}{\mathbb{N}}

\newcommand{\sepp}{\setlength{\itemsep}{-3pt}}

\newcommand{\argmin}{\ensuremath{\operatorname*{argmin}}}

\newcommand{\uball}{B}


\allowdisplaybreaks

\title{Stadium norm and Douglas--Rachford splitting:\\
a new approach to road design optimization}

\author{Heinz H.\ Bauschke\thanks{Mathematics, Irving K.~Barber School,
University of British Columbia,  Kelowna, B.C.~V1V~1V7, Canada.
Email: {\tt heinz.bauschke@ubc.ca}},  ~
Valentin R.\ Koch\thanks{Information Modeling \& Platform
Products Group (IPG), Autodesk, Inc.  Email: {\tt valentin.koch@autodesk.com}
}~~ and ~Hung M.\ Phan\thanks{Mathematics, Irving K.~Barber School,
University of British Columbia,  Kelowna, B.C.~V1V~1V7, Canada. 
Email: {\tt hung.phan@ubc.ca}}}

\date{September 29, 2014} 
\RequirePackage[normalem]{ulem} 
\RequirePackage{color} 

\makeatletter
\newcommand\appendix@section[1]{%
\refstepcounter{section}%
\orig@section*{Appendix \@Alph\c@section: #1}%
\addcontentsline{toc}{section}{Appendix \@Alph\c@section: #1}%
}
\let\orig@section\section
\g@addto@macro\appendix{\let\section\appendix@section}
\makeatother

\interfootnotelinepenalty=10000
\begin{document}

\maketitle

\begin{abstract}
\noindent
The basic optimization problem of road design is quite
challenging due to a objective function that is the sum of
nonsmooth functions and the presence of set constraints. 
In this paper, we model and solve this problem by employing the
Douglas--Rachford splitting algorithm. This requires a careful
study of new proximity operators related to minimizing area and
to the stadium norm. 
We compare our algorithm to a state-of-the-art 
projection algorithm. Our numerical results illustrate the
potential of this algorithm to significantly reduce cost in road design. 
\end{abstract}

\noindent {\bf Keywords:}
convex function, 
convex set,
Douglas--Rachford algorithm,
Fenchel conjugate, 
intrepid projector, 
method of cyclic intrepid projections, 
norm, 
projection,
projector, 
proximal mapping,
proximity operator,
road design,
stadium norm.

\noindent {\bf 2010 Mathematics Subject Classification:}
Primary 65K05, 90C25; Secondary 41A65, 49M27, 49M37, 52A21. 

\setlength{\parskip}{6pt}

\section{Introduction}

\subsection{The road design problem}

We set 
\begin{empheq}[box=\mybluebox]{equation}
X = \RR^n 
\end{empheq}
and write $x=(x_1,\ldots,x_n)$ for a vector in $X$.
Now fix
\begin{empheq}[box=\mybluebox]{equation}\label{e:t1-tn}
t=(t_1,\ldots,t_n)\in X\quad\text{such that}\quad
t_1<\cdots<t_n.
\end{empheq}
For every $x$ in $X$, 
there is a unique corresponding piecewise linear function ---
or {\em linear spline} --- $l_{(t,x)}:[t_1,t_n]\to\RR$ given by
\begin{equation}\label{e:l(t,x)}
l_{(t,x)}(s):=x_i+(x_{i+1}-x_i)\frac{s-t_i}{t_{i+1}-t_i},
\quad\text{for}\quad s\in[t_i,t_{i+1}],\ i\in\{1,\ldots,n-1\}.
\end{equation}
In civil engineering, such a spline may represent the \emph{vertical
profile} of a road design. In this context, $t_i$ is the horizontal
distance between a \emph{station} $i\in\{1,\ldots,n-1\}$ along the
road, and the starting station $i=1$ of the same road. The station
value $t_i$, together with the \emph{elevation} value $x_i$ form a
\emph{point of vertical intersection} $(t_i,x_i)$, where two vertical
tangents intersect. Vertical curves are placed beneath or above
these points to allow for a smooth ride.

The most basic problem in road design is to satisfy the following
three types of constraints:
\begin{itemize}
\sepp 
\item {\bf interpolation constraints:} For a subset $J$ of $\{1,\ldots,n\}$, we have $x_j=y_j$, where $y\in\RR^J$ is given.

\item {\bf slope constraints:} each slope $s_j:=(x_{j+1}-x_j)/(t_{j+1}-t_j)$ satisfies $|s_j|\leq\sigma_j$ where $j\in\{1,...,n-1\}$ and $\sigma\in\RPP^{n-1}$ is given.

\item {\bf curvature constraints:} $\gamma_j\geq s_{j+1}-s_j\geq\delta_j$, for every $j\in\{1,\ldots,n-2\}$, and for given $\gamma$ and $\delta$ in $\RR^{n-2}$.
\end{itemize}
The interpolation constraint fixes a point of vertical intersection
$(t_i,x_i)$ to a given elevation $x_i$. This allows for the
construction of an intersection with an existing road that crosses
the new road at $t_i$.  The slope constraint is required for safety
reasons and to ensure good traffic flow.  The curvature constraints
limits the grade change of the incoming and outgoing tangents. This
limits the curvature of vertical smoothing curves, which is very
important for the visibility of oncoming traffic. It also limits
the vertical acceleration on a vehicle, which contributes to a more
comfortable ride.

The engineer is first and foremost concerned with meeting these
constraints. 
In \cite{BK13}, it is shown how the engineer's problem can be
translated into a
feasibility problem involving six sets in $X$:
\begin{equation}
\text{find $x\in C_1\cap C_2\cap \cdots \cap C_6$.}
\end{equation}
Of the infinitude of possible solutions for this problem,
the engineer may be particularly interested in those that are
optimal in some sense. For instance, in road design, it is desirable to 
find a solution that may be close to a given fixed vector
, a solution that minimizes the amount of earth work
(cut and fill), a solution that balances cut and fill, 
or variants and combinations thereof. If more than one
objective function is of interest, it is common to additively
combine these functions, perhaps by scaling the functions to give
different levels of importance to them. In summary, we are faced
with the problem
\begin{empheq}[box=\mybluebox]{equation}\label{e:prob}
{\rm minimize}\quad F(x)
\quad\text{subject to}\quad
x\in C_1\cap\cdots \cap C_6, 
\end{empheq}
where $F$ itself may be a sum of (scaled) objective functions. 
The function $F$ is typically \emph{nonsmooth} which prevents the
use of standard optimization methods. 
This is the abstraction of the road design optimization problem.

\subsection{Objective and outline of this paper}

The objective of this paper is to present a framework for solving
the problem \eqref{e:prob} based on the Douglas--Rachford
splitting algorithm. This involves the introduction
and computation of new proximity operators to deal with the
objective function. Once all required operators are obtained in
closed form, we test the algorithm numerically. 

The Douglas-Rachford algorithm itself will be reviewed 
in Section~\ref{s:DR}. 
The projection operators and proximity operators are obtained 
in Sections~\ref{s:Proxetal}--\ref{s:rarea}. 
We report on numerical experiments in Section~\ref{s:numerix},
which also contains some concluding remarks. 

\subsection{Notation}
We write $\NN$ for the nonnegative integers $\{0,1,2,\ldots\}$
and $\RR$ for the real numbers.
We also set $\RP = \menge{x\in\RR}{x\geq 0}$,
$\RPP = \menge{x\in\RR}{x>0}$, $\RM=-\RP$, and $\RMM=-\RPP$. 
Notation not explicitly defined follows \cite{BC}. 

\section{Proximity operators, projectors, and norms}

\label{s:Proxetal}

\subsection{Projectors}

Let $C$ be a nonempty closed convex subset of $X$.
It is well known (see, e.g., \cite[Theorem~3.14]{BC}) 
that every point $x$ in $X$ has \emph{exactly one}
nearest point in $C$, denoted by $\prox_C(x)$ and called the projection
of $x$ onto $C$. The induced operator 
\begin{equation}
\prox_C \colon X\to X
\end{equation}
is called the \emph{projection operator} or \emph{projector} of
$C$.

The following two projectors are simple but useful.

\begin{example}
\label{ex:clip}
Let $\alpha$, $\beta$, and $x$ be in $\RR$ such that
$\alpha<\beta$.
Then 
\begin{equation}
\prox_{[\alpha,\beta]}(x) = 
\max\big\{ \alpha,\min\{\beta,x\}\big\}
= \min\big\{ \beta,\max\{\alpha,x\}\big\}=
\begin{cases}
\alpha, &\text{if $x<\alpha$;}\\
x, &\text{if $\alpha\leq x\leq \beta$;}\\
\beta, &\text{if $\beta<x$.}
\end{cases}
\end{equation}
Moreover, $\beta-\prox_{[\alpha,\beta]}(x) =
\prox_{[0,\beta-\alpha]}(\beta-x)$;
in particular,
\begin{equation}
\label{e:01clip}
1-\prox_{[0,1]}(x) = \prox_{[0,1]}(1-x).
\end{equation}
\end{example}

\begin{lemma}[projector of a line segment]
\label{l:projseg}
Let $a$ and $b$ be distinct vectors in $X$,
let $x\in X$, and set $q = \scal{a-x}{a-b}/\|a-b\|^2$. 
Then
\begin{equation}
\label{e:projseg}
\prox_{[a,b]}(x) = (1-\lambda)a + \lambda b,
\;\;\text{where}\;\;
\lambda = P_{[0,1]}(q)
= \begin{cases}
0, &\text{if $q< 0$;}\\
q, &\text{if $q\in[0,1]$;}\\
1, &\text{if $q>1$.}
\end{cases}
\end{equation}
Alternatively, and more symmetrically, 
\begin{equation}
\label{e:symmprojseg}
\prox_{[a,b]}(x) = 
\prox_{[0,1]}\bigg(\frac{\scal{b-x}{b-a}}{\|b-a\|^2}\bigg)a +
\prox_{[0,1]}\bigg(\frac{\scal{a-x}{a-b}}{\|a-b\|^2}\bigg)b.
\end{equation}
\end{lemma}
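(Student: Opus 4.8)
The plan is to reduce this to a one-dimensional minimization. Every point of the segment has the form $p(\lambda) := (1-\lambda)a + \lambda b = a + \lambda(b-a)$ for $\lambda\in[0,1]$, so $\prox_{[a,b]}(x)$ equals $p(\lambda_*)$, where $\lambda_*$ minimizes the squared distance $\phi(\lambda) := \|p(\lambda)-x\|^2$ over the interval $[0,1]$. First I would expand $\phi$ as a quadratic in $\lambda$, obtaining
\[ \phi(\lambda) = \|a-x\|^2 + 2\lambda\scal{a-x}{b-a} + \lambda^2\|b-a\|^2. \]
Since $a\neq b$, the leading coefficient $\|b-a\|^2$ is strictly positive, so $\phi$ is a strictly convex parabola; setting $\phi'(\lambda)=0$ gives its unique unconstrained minimizer $\scal{a-x}{a-b}/\|a-b\|^2 = q$.

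The key step is then to pass from the unconstrained minimizer $q$ to the constrained minimizer $\lambda_*$ over $[0,1]$. Because $\phi$ is strictly convex (strictly decreasing to the left of $q$ and strictly increasing to its right), its minimizer over $[0,1]$ is obtained simply by clipping $q$ to the interval, that is, $\lambda_* = \prox_{[0,1]}(q)$ in the notation of Example~\ref{ex:clip}. Substituting this into $p(\lambda_*)$ produces the first formula \eqref{e:projseg}.

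For the symmetric form \eqref{e:symmprojseg}, I would introduce the companion quantity $q' := \scal{b-x}{b-a}/\|b-a\|^2$ and verify the identity $q+q'=1$. Indeed,
\[ \scal{a-x}{a-b} + \scal{b-x}{b-a} = \scal{(a-x)-(b-x)}{a-b} = \|a-b\|^2, \]
so dividing by $\|a-b\|^2$ yields $q+q'=1$. Then, invoking \eqref{e:01clip}, one has $1-\lambda_* = 1-\prox_{[0,1]}(q) = \prox_{[0,1]}(1-q) = \prox_{[0,1]}(q')$; feeding $1-\lambda_* = \prox_{[0,1]}(q')$ and $\lambda_* = \prox_{[0,1]}(q)$ into $(1-\lambda_*)a + \lambda_* b$ gives exactly \eqref{e:symmprojseg}.

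I expect no serious obstacle here. The only point deserving care is the justification that clipping the one-dimensional unconstrained minimizer onto $[0,1]$ returns the constrained minimizer; this rests on the strict convexity of $\phi$ (equivalently, it can be read off the variational characterization of the projection onto a convex set). Everything else is routine algebra, with the identity $q+q'=1$ being the small observation that makes the symmetric formula fall out.
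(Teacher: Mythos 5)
Your proposal is correct and follows essentially the same route as the paper: minimize the quadratic $\lambda\mapsto\|x-((1-\lambda)a+\lambda b)\|^2$, clip its unconstrained minimizer $q$ to $[0,1]$, and derive the symmetric form \eqref{e:symmprojseg} from \eqref{e:projseg} together with the identity \eqref{e:01clip}. Your write-up merely makes explicit the observation $q+q'=1$ that the paper leaves implicit.
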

\begin{proof}
This follows by discussing the minimization of the
quadratic function
\begin{equation}
\lambda\mapsto
\|x-((1-\lambda)a+\lambda b)\|^2
= (1-\lambda)\|x-a\|^2 +\lambda\|x-b\|^2
-\lambda(1-\lambda)\|a-b\|^2,
\end{equation}
which has the derivative
$2\lambda\|a-b\|^2 - 2\scal{a-x}{a-b}$.
To obtain \eqref{e:symmprojseg}, use \eqref{e:01clip} and
\eqref{e:projseg}. 
\end{proof}

\subsection{Proximity operators}

Let $f\colon X\to\RX$ be a function that is convex, lower
semicontinuous, and proper\footnote{See, e.g., \cite{Rocky70} and
\cite{BC} for relevant material in Convex Analysis.}. 
Fix $x\in X$. Then it well known (see, e.g.,
\cite[Section~12.4]{BC}) that the function 
\begin{equation}
X\to\RX\colon y\mapsto f(y)+\tfrac{1}{2}\|x-y\|^2
\end{equation}
has a \emph{unique} minimizer which we denote by
$\prox_f(x)$. 
The induced operator
\begin{equation}
\prox_f \colon X\to X
\end{equation}
is called the \emph{proximal mapping} or \emph{proximity
operator} (see \cite{Moreau}) of $f$. 
These operators are important building blocks in 
algorithms for solving optimization problems with nonsmooth
objective functions; see, e.g., 
\cite{BC}, \cite{CPBanff}, 
and the references therein.
Note that if $f$ is the \emph{indicator function} of $C$,
i.e., 
\begin{equation}
\iota_C \colon 
X\to\RX\colon
x\mapsto 
\begin{cases}
0, &\text{if $x\in C$;}\\
+\infty, &\text{otherwise,}
\end{cases}
\end{equation}
then $\prox_f = \prox_C$; thus, proximity operators are
generalizations of projectors. 

We also point out that some algorithms utilize $\prox_{f^*}$, 
the proximity operator of the \emph{Fenchel conjugate} $f^*$ of $f$, which
is defined by $f^*(x^*)  = \sup_{x\in X}(\scal{x^*}{x}-f(x))$ at
$x^*\in X$. 
If $\gamma\in\RPP$, then
(see \cite[Theorem~14.3(ii)]{BC})
\begin{equation}
\label{e:decomp}
(\forall x\in X)\quad
x = \gamma\prox_{\gamma^{-1}f}(\gamma^{-1}x) + \prox_{\gamma
f^*}(x).
\end{equation}

\begin{lemma}
\label{l:0511c}
Let $f\colon X \to\RR$ be convex and positively homogeneous, 
let $\alpha\in\RPP$, let $\gamma\in\RPP$, let $w\in X$, and set
\begin{equation}
h\colon X\to\RR \colon x\mapsto \alpha f(x-w).
\end{equation}
Let $x\in X$. Then 
\begin{equation}
\label{e:l:0511c:1}
\prox_{\gamma h}(x)=
w+\gamma\alpha \prox_{f}\big(\tfrac{x-w}{\gamma\alpha}\big) = 
x-\gamma\alpha\prox_{f^*} (\tfrac{x-w}{\gamma\alpha})
\end{equation}
and
\begin{equation}
\label{e:l:0511c:2}
\prox_{\gamma h^*}(x)= x-\gamma w - \alpha
\prox_f\big(\tfrac{x-\gamma w}{\alpha}\big) = \alpha \prox_{f^*}(\tfrac{x-\gamma w}{\alpha}).
\end{equation}
\end{lemma}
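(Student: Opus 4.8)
The plan is to reduce everything to two ingredients: the defining minimization property of the proximity operator combined with the positive homogeneity of $f$, and the Moreau-type decomposition \eqref{e:decomp}. No further calculus rules are needed beyond these.

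First I would establish the left-hand equality in \eqref{e:l:0511c:1} directly from the definition. By construction, $\prox_{\gamma h}(x)$ is the minimizer over $y\in X$ of $\gamma\alpha f(y-w)+\tfrac12\|x-y\|^2$. The natural move is the change of variables $y=w+\gamma\alpha u$: since $\gamma\alpha>0$, positive homogeneity gives $f(y-w)=f(\gamma\alpha u)=\gamma\alpha f(u)$, while $\|x-y\|^2=(\gamma\alpha)^2\|\tfrac{x-w}{\gamma\alpha}-u\|^2$. The objective then factors as $(\gamma\alpha)^2\bigl(f(u)+\tfrac12\|\tfrac{x-w}{\gamma\alpha}-u\|^2\bigr)$, whose minimizer in $u$ is exactly $\prox_f(\tfrac{x-w}{\gamma\alpha})$. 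Undoing the substitution yields $\prox_{\gamma h}(x)=w+\gamma\alpha\prox_f(\tfrac{x-w}{\gamma\alpha})$. This is the one place where positive homogeneity is essential, and I expect it to be the only genuinely nontrivial point; the remainder is bookkeeping.

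For the right-hand equality in \eqref{e:l:0511c:1}, I would invoke the $\gamma=1$ instance of \eqref{e:decomp}, namely $u=\prox_f(u)+\prox_{f^*}(u)$, evaluated at $u=\tfrac{x-w}{\gamma\alpha}$. Multiplying through by $\gamma\alpha$ and adding $w$ transforms $w+\gamma\alpha\prox_f(\tfrac{x-w}{\gamma\alpha})$ into $x-\gamma\alpha\prox_{f^*}(\tfrac{x-w}{\gamma\alpha})$, which is the claimed identity.

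Finally, for \eqref{e:l:0511c:2} I would apply \eqref{e:decomp} itself, now with $h$ in place of $f$, which gives $\prox_{\gamma h^*}(x)=x-\gamma\prox_{\gamma^{-1}h}(\gamma^{-1}x)$. Into this I substitute the already-established formula \eqref{e:l:0511c:1}, read with $\gamma$ replaced by $\gamma^{-1}$ and $x$ replaced by $\gamma^{-1}x$; the inner argument simplifies neatly, since $\tfrac{\gamma^{-1}x-w}{\gamma^{-1}\alpha}=\tfrac{x-\gamma w}{\alpha}$, and one obtains $\prox_{\gamma h^*}(x)=x-\gamma w-\alpha\prox_f(\tfrac{x-\gamma w}{\alpha})$. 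The remaining equality in \eqref{e:l:0511c:2} then follows exactly as before, from the $\gamma=1$ Moreau decomposition evaluated this time at $\tfrac{x-\gamma w}{\alpha}$. The only point requiring care is tracking the $\gamma^{-1}$ rescaling in this last step, so that the two occurrences of $\gamma$ appearing in \eqref{e:decomp} are not conflated with the $\gamma^{-1}$ fed into \eqref{e:l:0511c:1}; once the substitution is set up correctly, the algebra is routine.
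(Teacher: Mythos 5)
Your proof is correct and takes essentially the same route as the paper's: the affine change of variables $y=w+\gamma\alpha u$ together with positive homogeneity yields the first equality in \eqref{e:l:0511c:1}, and the Moreau decomposition \eqref{e:decomp} (used once with $\gamma=1$, and once with $h$ in place of $f$ followed by the rescaled instance of \eqref{e:l:0511c:1}) gives the remaining identities. Your explicit tracking of the $\gamma^{-1}$ substitution in the proof of \eqref{e:l:0511c:2} simply spells out what the paper compresses into ``combine \eqref{e:l:0511c:1} with \eqref{e:decomp}.''
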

\begin{proof}
Using \eqref{e:decomp}, 
we have
\begin{subequations}
\begin{align}
\prox_{\gamma h}(x)&=\argmin_{y\in X}\big(\tfrac{1}{2}\|y-x\|^2+(\gamma\alpha) f(y-w)\big)\\
&=\argmin_{y\in X}\Big(\tfrac{1}{2}\|\tfrac{y-w}{\gamma\alpha}-\tfrac{x-w}{\gamma\alpha}\|^2+ f(\tfrac{y-w}{\gamma\alpha})\Big)\\
&=w+\gamma\alpha\argmin_{z\in X}\Big(\tfrac{1}{2}\|z-\tfrac{x-w}{\gamma\alpha}\|^2+ f(z)\Big)\\
&=w+\gamma\alpha\prox_f(\tfrac{x-w}{\gamma\alpha})\\
&=w+\gamma\alpha\Big(\tfrac{x-w}{\gamma\alpha}-\prox_{f^*}(\tfrac{x-w}{\gamma\alpha})\Big)\\
&=x-\gamma\alpha\prox_{f^*}(\tfrac{x-w}{\gamma\alpha}),
\end{align}
\end{subequations}
which proves \eqref{e:l:0511c:1}.
To obtain \eqref{e:l:0511c:2},
combine \eqref{e:l:0511c:1} with \eqref{e:decomp}.
\end{proof}

\subsection{Primal and dual norms}

Recall that a \emph{norm} $f$ on $X$ is a convex function
such that $(\forall\alpha\in\RR)$ $f(\alpha x)= |\alpha|f(x)$ and
$f$ vanishes only at the origin. Associated with the norm $f$ are its primal 
and dual closed unit balls which are defined by
\begin{equation}
\uball = \uball(f) = \menge{x\in X}{f(x)\leq 1}
\;\;\text{and}\;\;
\uball_* = \uball_*(f) = \menge{x^*\in X}{\sup\scal{x^*}{\uball}\leq 1},
\end{equation}
respectively.

\begin{lemma}[dual ball]
\label{l:0509c}
Let $f\colon X\to\RR$ be a norm.
Then the dual ball is given by 
\begin{equation}\label{e:l0509c1}
\uball_*
= \conv  \overline{\menge{\nabla f(x)}{f(x)=1 \;\text{and}\;
x\in \dom \nabla f}},
\end{equation}
where $\dom\nabla f$ is the sets of points at which $f$ is differentiable.
\end{lemma}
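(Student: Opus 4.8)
The plan is to recognize the right-hand side of \eqref{e:l0509c1} as the closed convex hull of the \emph{exposed points} of $\uball_*$, and then to invoke the Minkowski and Straszewicz theorems. Throughout, abbreviate $G=\menge{\nabla f(x)}{f(x)=1\text{ and }x\in\dom\nabla f}$. Note first that $\uball_*$ is compact and convex: a norm on $X=\RR^n$ is equivalent to the Euclidean norm, so $\uball_*$ is closed and bounded. Since each $\nabla f(x)\in G$ lies in $\pt f(x)\subseteq\uball_*$, we have $\overline G\subseteq\uball_*$, and because $\overline G$ is compact its convex hull $\conv\overline G$ is compact, hence closed, and coincides with $\clcv G$. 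This already gives the easy inclusion $\conv\overline G\subseteq\uball_*$, and reduces the statement to proving the reverse inclusion.

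The first real step is to show that $f$ is the support function of $\uball_*$, that is, $(\forall x\in X)$ $f(x)=\sup\scal{\uball_*}{x}$. A finite convex function on $\RR^n$ is continuous, so $f$ is convex, lower semicontinuous, and proper, and the Fenchel--Moreau theorem applies. Using the positive homogeneity of $f$, a direct computation gives $f^*=\iota_{\uball_*}$: if $x^*\in\uball_*$ then $\scal{x^*}{x}\le f(x)$ for all $x$, so the supremum defining $f^*(x^*)$ equals $0$; otherwise it is $+\infty$. Hence $f=f^{**}=(\iota_{\uball_*})^*$ is exactly the support function of $\uball_*$.

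The second step is to identify $G$ with the set of exposed points of $\uball_*$. For a support function, the subdifferential at $x$ is the face of $\uball_*$ exposed by $x$, namely $\pt f(x)=\menge{x^*\in\uball_*}{\scal{x^*}{x}=f(x)}$. Consequently, when $f(x)=1$ and $f$ is differentiable at $x$, this exposed face is the singleton $\{\nabla f(x)\}$, so $\nabla f(x)$ is an exposed point of $\uball_*$. Conversely, if $e$ is exposed by a direction $x$ (necessarily $x\ne 0$, since $\pt f(0)=\uball_*$ is not a singleton), then $\pt f(x)=\{e\}$ forces $f$ to be differentiable at $x$ with $\nabla f(x)=e$; rescaling $x$ to $x/f(x)$ and using that $\nabla f$ is positively homogeneous of degree $0$ shows $e\in G$. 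Thus $G$ is precisely the set of exposed points of $\uball_*$.

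Finally I would assemble the pieces. By Straszewicz's theorem the exposed points are dense in the extreme points, so $\operatorname{ext}\uball_*\subseteq\overline{G}$; combining this with the Minkowski representation $\uball_*=\conv(\operatorname{ext}\uball_*)$ (the finite-dimensional Krein--Milman theorem) yields $\uball_*\subseteq\conv\overline{G}$, and hence $\uball_*=\clcv G=\conv\overline G$, which is \eqref{e:l0509c1}. The main obstacle is this last step: the set $\dom\nabla f$ need not contain a direction exposing every extreme point, so it is not obvious that the gradients already ``see'' the full ball. That gap is closed precisely by Straszewicz's theorem, which guarantees that each extreme point is a limit of exposed points, every one of which lies in $G$.
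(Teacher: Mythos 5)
Your proof is correct, but it takes a genuinely different route from the paper's. The paper works entirely at the origin: it uses $\partial f(0)=\uball_*$, invokes Rockafellar's Theorem~25.6 to write $\partial f(0)$ as the closed convex hull of all limits of gradients $\nabla f(x_k)$ with $\dom\nabla f\ni x_k\to 0$, exploits the degree-zero positive homogeneity of $\nabla f$ to identify that set of limits with the closure of $\menge{\nabla f(x)}{f(x)=1,\ x\in\dom\nabla f}$, and finally swaps closure and convex hull via Theorem~17.2 (compactness of the convex hull of a compact set). You instead dualize: you identify $f$ as the support function of $\uball_*$, show that the gradients at unit vectors are precisely the \emph{exposed points} of $\uball_*$ (using that a convex function on $\RR^n$ is differentiable exactly where its subdifferential is a singleton), and then conclude with Straszewicz's theorem (exposed points are dense in extreme points) together with Minkowski's theorem ($\uball_*=\conv\operatorname{ext}\uball_*$). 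Both arguments are finite-dimensional at heart — Theorem~25.6 on the paper's side, Straszewicz and Minkowski on yours — and both ultimately lean on Rockafellar, just on different chapters. The paper's route is more economical, being essentially one application of a single strong theorem about subdifferentials; yours is longer but buys a sharper geometric byproduct, namely that the set $\menge{\nabla f(x)}{f(x)=1,\ x\in\dom\nabla f}$ is exactly the set of exposed points of the dual ball, which the paper's argument never makes explicit.
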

\begin{proof}
Set $S:=\menge{x\in\RR^n}{f(x)=1}$.
Since $f$ is a norm, we have 
$\partial f(0)=\uball_*$.
Moreover, $0\notin\dom\nabla f$ and
$(\forall x\in\dom\nabla f)$ $\nabla f(\RPP x)=\nabla f(x)$.
It follows that
\begin{subequations}
\begin{align}
\menge{\nabla f(x)}{x\in S \cap \dom \nabla f}
&\subseteq
\menge{\lim \nabla f(x_k)}{0\leftarrow x_k\in \dom\nabla f}\\
&\subseteq \overline{\menge{\nabla f(x)}{x\in S \cap \dom \nabla
f}};
\end{align}
\end{subequations}
consequently,
\begin{equation}
\overline{\menge{\lim \nabla f(x_k)}{0\leftarrow x_k\in \dom\nabla f}}
= \overline{\menge{\nabla f(x)}{x\in S \cap \dom \nabla f}}
\end{equation}
Hence, using
\cite[Theorem~25.6 and Theorem~17.2]{Rocky70}, we deduce that 
\begin{subequations}
\begin{align}
\uball_* &=\partial f(0)\\
&= \overline{\conv \menge{\lim \nabla f(x_k)}{0\leftarrow x_k\in
\dom\nabla f}}\\
&= \conv \overline{\menge{\lim \nabla f(x_k)}{0\leftarrow x_k\in
\dom\nabla f}}\\
&= \conv  \overline{\menge{\nabla f(x)}{x\in S \cap \dom \nabla
f}},
\end{align}
\end{subequations}
as claimed.
\end{proof}

\begin{remark}[dual norm]
Let $f\colon X\to\RR$ be a norm.
It follows from \cite[Section~15]{Rocky70} that
the dual norm $f_*$ can be found by
\begin{equation}
\label{e:dualnorm}
(\forall x^*\in X)\quad
f_*(x^*) = \sup\menge{\scal{x^*}{x}}{f(x)=1}.
\end{equation}
Moreover, if
$S$ is a subset of $X$ such that $\conv S$ is equal to the unit
ball of $f$, then
\begin{equation}
\label{e:dualnorm2}
(\forall x^*\in X)\quad
f_*(x^*) = \sup\menge{\scal{x^*}{x}}{x\in S}.
\end{equation}
\end{remark}

We conclude this section with a proximity operator formula that
will be useful later.

\begin{lemma}
\label{l:fuckingback1}
Let $f\colon X\to\RR$ be a norm, and denote its dual ball by
$\uball_*$.
Let $\alpha$ and $\gamma$ be in $\RPP$, let $w\in X$,
and set $h\colon X\to\RR\colon x\mapsto \alpha f(x-w)$.
Then 
\begin{equation}
(\forall x\in X)\quad 
\prox_{\gamma h}(x)=x-\gamma\alpha \prox_{\uball_*}
(\tfrac{x-w}{\gamma\alpha})
\quad\text{and}\quad
\prox_{\gamma h^*}(x)=\alpha \prox_{\uball_*}(\tfrac{x-\gamma w}{\alpha}).
\end{equation}
\end{lemma}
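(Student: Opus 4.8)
The plan is to reduce everything to Lemma~\ref{l:0511c}, whose hypotheses a norm automatically satisfies, and then to replace the conjugate proximity operator $\prox_{f^*}$ by the projector $\prox_{\uball_*}$ via the classical identification $f^*=\iota_{\uball_*}$.

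First I would check that $f$ fits the framework of Lemma~\ref{l:0511c}. As a norm, $f$ is convex by definition and positively homogeneous, since $f(\alpha x)=|\alpha|f(x)=\alpha f(x)$ for every $\alpha\in\RP$. Thus both displayed formulas of Lemma~\ref{l:0511c} apply verbatim with the present $\alpha$, $\gamma$, $w$, and $h$; in particular they give
\begin{equation}
\prox_{\gamma h}(x)=x-\gamma\alpha\prox_{f^*}\big(\tfrac{x-w}{\gamma\alpha}\big)
\quad\text{and}\quad
\prox_{\gamma h^*}(x)=\alpha\prox_{f^*}\big(\tfrac{x-\gamma w}{\alpha}\big).
\end{equation}

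The only substantive step is to show that $f^*$ is the indicator function $\iota_{\uball_*}$ of the dual ball, so that $\prox_{f^*}=\prox_{\uball_*}$. Using positive homogeneity of $f$, the membership $x^*\in\uball_*$ is equivalent to $\scal{x^*}{x}\leq f(x)$ for all $x\in X$ (rescale any nonzero $x$ to the unit sphere of $f$). Hence, if $x^*\in\uball_*$, the quantity $\scal{x^*}{x}-f(x)$ is $\leq 0$ for every $x$ and vanishes at $x=0$, so $f^*(x^*)=0$; and if $x^*\notin\uball_*$, there is some $x_0$ with $\scal{x^*}{x_0}>f(x_0)$, and evaluating along the ray $t\mapsto tx_0$ drives $\scal{x^*}{tx_0}-f(tx_0)=t(\scal{x^*}{x_0}-f(x_0))$ to $+\infty$, so $f^*(x^*)=+\infty$. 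Therefore $f^*=\iota_{\uball_*}$, and since the proximity operator of an indicator function is the corresponding projector, $\prox_{f^*}=\prox_{\uball_*}$.

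Finally I would substitute $\prox_{f^*}=\prox_{\uball_*}$ into the two formulas above to obtain the claimed identities. I expect no genuine obstacle here: the norm-conjugate computation is standard and the remainder is bookkeeping through Lemma~\ref{l:0511c}. The one point demanding a little care is the equivalence between $x^*\in\uball_*$ and the inequality $\scal{x^*}{\cdot}\leq f(\cdot)$, which rests on the homogeneity rescaling argument just sketched.
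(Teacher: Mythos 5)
Your proposal is correct and follows essentially the same route as the paper: reduce to Lemma~\ref{l:0511c} and identify $\prox_{f^*}$ with $\prox_{\uball_*}$ via $f^*=\iota_{\uball_*}$. The only difference is that the paper cites this conjugacy fact (Proposition~14.12 of \cite{BC}) whereas you prove it directly by the homogeneity rescaling argument, which is a valid and self-contained substitute.
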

\begin{proof}
This follows from Lemma~\ref{l:0511c} because $f^*=\iota_{B_*}$
(see, e.g., \cite[Proposition~14.12]{BC}) and
$\prox_{\iota_{B_*}} = \prox_{B_*}$.
\end{proof}

\subsection{A menagerie of proximity operators}

In this section we collect various proximity operators
that relevant for road design optimization. 
We provide a user friendly table, taking into account a scaling
parameter and the Fenchel conjugate.

\begin{theorem}\label{t:comm-prxs}
Let $x\in X$, let $w\in X$, let $\alpha\in\RPP$, 
let $\gamma\in\RPP$, and let $\nu\in\{1,\ldots,n\}$.
Then the formulae in the following table hold\footnote{Here
$\|x\|_1=\sum_{\nu=1}^n|x_\nu|$ denotes the $\ell^1$-norm.}:

\begin{tabular}{l|l}
{\rm Function} $f(x)$ & {\rm Proximity operators} $\prox_{\gamma
f}$ {\rm and} $\prox_{\gamma f^*}$\\[+2mm]
\hline
$\iota_C(x)$ & $\prox_{\gamma f}(x) = \prox_C(x)$.\\
& $\prox_{\gamma f^*}(x) = x- \gamma \prox_C(x/\gamma)$.
\\[1.1ex]
\hline
$\alpha\|x-w\|^2$
&$\prox_{\gamma f}(x)=(1+2\alpha\gamma)^{-1}(x+2\alpha\gamma w)$.\\
&$\prox_{\gamma f^*}(x)=x-\gamma(\gamma+2\alpha)^{-1}(x+2\alpha w)$.
\\[+2mm]
\hline\\[-4mm]
$\alpha\|x-w\|$
&$\prox_{\gamma f}(x)=
	\begin{cases}
		x+\alpha\gamma\displaystyle \frac{w-x}{\|w-x\|},&\text{if } \|w-x\|>\alpha\gamma;\\
		w,&\text{otherwise}.
	\end{cases}$\\[+10mm]
&$\prox_{\gamma f^*}(x)=
	\begin{cases}
		\alpha\displaystyle \frac{x-\gamma w}{\|x-\gamma w\|},&\text{if } \|x-\gamma w\|>\alpha;\\
		x-\gamma w,&\text{otherwise}.
	\end{cases}$
\\[+10mm]
\hline\\[-4mm]
$\alpha\|x-w\|_1$
&$
	\big(\prox_{\gamma f}(x)\big)_\nu
	= \begin{cases}
	x_\nu+\alpha\gamma\displaystyle \frac{w_\nu-x_\nu}{|w_\nu-x_\nu|}, &\text{if
	$|w_\nu-x_\nu|>\alpha\gamma$;}\\
	w_\nu, &\text{otherwise.}
	\end{cases}$\\[+10mm]
&$
	\big(\prox_{\gamma f^*}(x)\big)_\nu
	= \begin{cases}
	\alpha \displaystyle \frac{x_\nu-\gamma w_\nu}{|x_\nu-\gamma w_\nu|}, &\text{if
	$|x_\nu-\gamma w_\nu|>\alpha$;}\\
	x_\nu-\gamma w_\nu, &\text{otherwise.}
	\end{cases}$
\\[+10mm]
\hline\\[-5mm]
{$\alpha\left|\scal{x^*}{x-w}\right|$}
& {$\prox_{\gamma f}(x)
=x-(\gamma\alpha)\prox_{[-1,1]}\Big(\tfrac{\scal{x^*}{x-w}}{\gamma\alpha\|x^*\|^2}\Big)x^*$.}\\[+4mm]
& {$\prox_{\gamma f^*}(x)
=\alpha\prox_{[-1,1]}\Big(\tfrac{\scal{x^*}{x-\gamma w}}{\alpha\|x^*\|^2}\Big)x^*$.}
\end{tabular}

\end{theorem}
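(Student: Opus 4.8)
The plan is to dispatch the five rows more or less independently, each time reducing to a result already in hand. For the indicator row, I would first note that $\gamma\iota_C=\iota_C$, so that $\prox_{\gamma f}=\prox_{\iota_C}=\prox_C$; the conjugate formula is then immediate from the Moreau decomposition \eqref{e:decomp}, since $\gamma^{-1}f=\iota_C$ forces $\prox_{\gamma f^*}(x)=x-\gamma\prox_C(x/\gamma)$. For the squared-norm row, the objective $y\mapsto\tfrac12\|y-x\|^2+\gamma\alpha\|y-w\|^2$ is smooth and strictly convex, so I would set its gradient $(y-x)+2\gamma\alpha(y-w)$ to zero, solve for $y$, and then obtain the conjugate formula by substituting $\prox_{\gamma^{-1}f}$ into \eqref{e:decomp}.

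The two genuine-norm rows reduce cleanly to Lemma~\ref{l:fuckingback1}. For $\alpha\|x-w\|$ with the Euclidean norm, the dual ball $\uball_*$ is again the Euclidean unit ball, whose projector is the radial clip sending $z$ to itself inside the ball and to $z/\|z\|$ outside; substituting $z=(x-w)/(\gamma\alpha)$ into $\prox_{\gamma h}(x)=x-\gamma\alpha\prox_{\uball_*}(z)$ splits into exactly the two displayed cases, and the conjugate follows the same way. The $\ell^1$ row I would obtain by observing that both $\tfrac12\|y-x\|^2$ and $\|y-w\|_1$ separate across coordinates, so the minimization decouples into $n$ independent one-dimensional problems, each of which is the $n=1$ instance of the Euclidean-norm row just settled; applying that formula coordinatewise gives the result (equivalently, one may project onto the dual ball $[-1,1]^n$ coordinatewise via Example~\ref{ex:clip}).

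The only row that is more than bookkeeping is the last. Here $g\colon x\mapsto|\scal{x^*}{x}|$ is convex and positively homogeneous but merely a seminorm, so I would invoke Lemma~\ref{l:0511c} rather than Lemma~\ref{l:fuckingback1}. The key step is to recognize $g(x)=\sup_{t\in[-1,1]}\scal{tx^*}{x}$ as the support function of the segment $[-x^*,x^*]$, so that $g^*=\iota_{[-x^*,x^*]}$ and $\prox_{g^*}=\prox_{[-x^*,x^*]}$. I would then evaluate this segment projection with Lemma~\ref{l:projseg}: taking $a=-x^*$ and $b=x^*$ gives $q=\tfrac12+\scal{x^*}{z}/(2\|x^*\|^2)$, and writing the projection as $(1-\lambda)a+\lambda b=(2\lambda-1)x^*$ reduces everything to the scalar identity $2\prox_{[0,1]}(\tfrac{s+1}{2})-1=\prox_{[-1,1]}(s)$, which collapses the answer to $\prox_{[-1,1]}\!\big(\scal{x^*}{z}/\|x^*\|^2\big)\,x^*$. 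Setting $z=(x-w)/(\gamma\alpha)$ and $z=(x-\gamma w)/\alpha$ in \eqref{e:l:0511c:1} and \eqref{e:l:0511c:2} respectively then produces the two table entries. The one place demanding care is precisely this passage from the segment projector to the symmetric $[-1,1]$ clip; every other row is a direct substitution into results already proved.
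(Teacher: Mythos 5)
Your proposal is correct and follows essentially the same route as the paper: the same row-by-row case analysis, with the indicator and squared-norm conjugates obtained from the Moreau decomposition \eqref{e:decomp}, the Euclidean and $\ell^1$ rows handled via Lemma~\ref{l:fuckingback1} and coordinatewise reduction to the scalar case, and the last row via Lemma~\ref{l:0511c} after identifying $f_0^*=\iota_{[-x^*,x^*]}$. The only difference is that you supply details the paper cites or asserts outright --- the direct gradient computation for the squared-norm row and the derivation of the segment projector from Lemma~\ref{l:projseg} together with the clip identity $2\prox_{[0,1]}\big(\tfrac{s+1}{2}\big)-1=\prox_{[-1,1]}(s)$ --- which makes your version slightly more self-contained but not a different proof.
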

\proof
\emph{Case 1:} $f(x) = \iota_C$.\\
The formula for $\prox_{\gamma f}$ is obvious, and the one for
$\prox_{\gamma f^*}$ follows from \eqref{e:decomp}.

\emph{Case 2:} $f(x) = \alpha\|x-w\|^2$.\\
Observe that 
$\gamma f(x) = (2\alpha\gamma)\|x-w\|^2/2$.
Hence 
\cite[Table~10.1.xi]{CPBanff} yields
$\prox_{\gamma f}(x)
= (1+2\alpha\gamma)^{-1}(x+2\alpha\gamma w)$.
and 
$\prox_{\gamma^{-1} f}(\gamma^{-1}x)
= (1+2\alpha\gamma^{-1})^{-1}(\gamma^{-1}x+2\alpha\gamma^{-1} w)
= (\gamma +2\alpha)^{-1}(x+2\alpha w)$.
It now follows from \eqref{e:decomp} that
$\prox_{\gamma f^*}(x) =
x-\gamma\prox_{\gamma^{-1}f}(\gamma^{-1}x)=
x- \gamma (\gamma +2\alpha)^{-1}(x+2\alpha w)$.

\emph{Case 3:} $f(x) = \alpha\|x-w\|$.\\
Since the dual ball of the Euclidean ball is the same as the
(primal) ball, denoted by $B$, 
we conclude from Lemma~\ref{l:fuckingback1} that
\begin{equation}
\prox_{\gamma f}(x) = x-\gamma\alpha
\prox_B\big(\tfrac{x-w}{\gamma\alpha}\big)
\;\;\text{and}\;\;
\prox_{\gamma f^*}(x) = \alpha
\prox_B\big(\tfrac{x-\gamma w}{\alpha}\big). 
\end{equation}
The formulae now follow because $P_B(y)=y/\|y\|$ for every $y\in
X\smallsetminus B$.  

\emph{Case 4:} $f(x) = \alpha\|x-w\|_1$.\\
This follows from Case~3 (applied with $X=\RR$)
and \cite[Proposition~23.16]{BC}.

\emph{Case 5:} $f(x)=\alpha|\scal{x^*}{x-w}|$.\\
Set $f_0:=\left|\scal{x^*}{\cdot}\right|$. Then $f_0$ is convex and positively homogeneous, and
\begin{equation}
f(x)=\alpha f_0(x-w).
\end{equation}
Set $D:= [-x^*,x^*]=\menge{t x^*}{t\in[-1,1]}$. 
Then $f_0^*=\iota_D$,
\begin{equation}
\prox_{f_0^*}(x)=\prox_D(x)=\prox_{[-1,1]}\Big(\tfrac{\scal{x}{x^*}}{\|x^*\|^2}\Big)x^*, 
\end{equation}
and the result follows from Lemma~\ref{l:0511c}.
\endproof

\section{The area between two line segments in $\RR^2$}

Let $\tau>0$, and let $(x_1,x_2)\in\RR^2$. 
Consider the two line segments $[(0,0),(\tau,0)]$ and
$[(0,x_1),(\tau,x_2)]$ in the Euclidean plane. 
We will derive a formula for the area $A(x_1,x_2)$ between these two line segments 
(see Figure~\ref{fig:area}).

\subsection{Area and stadium norm}
\label{ss:geo-area}
\begin{figure}[H]
\centering
\includegraphics[]{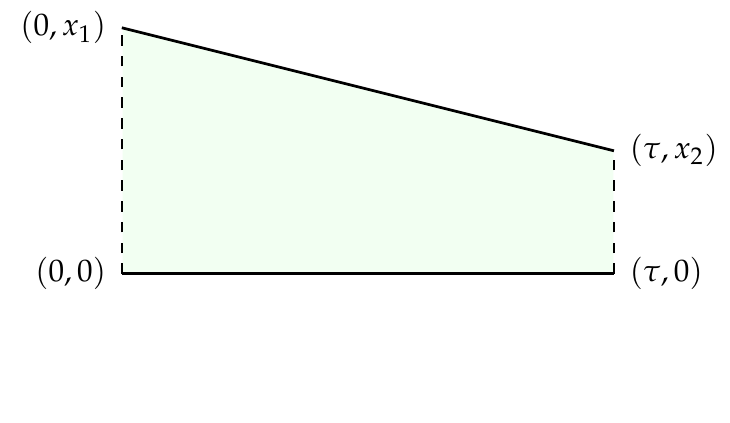}
\quad
\includegraphics[]{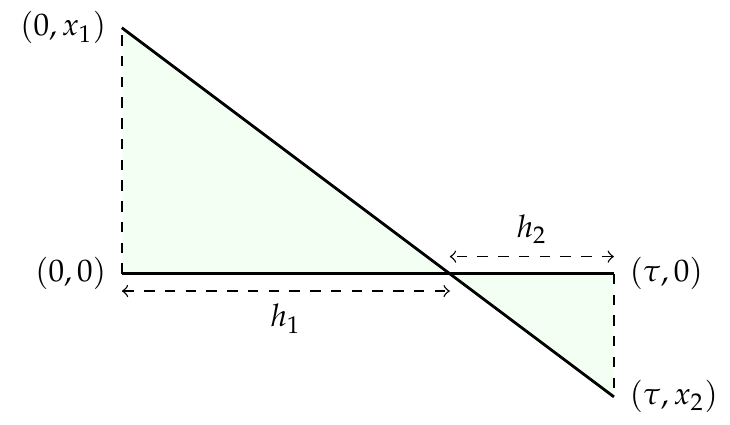}
\caption{Area between two line segments: the two alternatives}
\label{fig:area}
\end{figure}

We consider two cases.

\emph{Case 1}: $x_1x_2\geq 0$. Then it is obvious that
\begin{equation}
A(x_1,x_2)=\frac{\tau}{2}\big(|x_1|+|x_2|\big).
\end{equation}

\emph{Case 2}: $x_1x_2<0$. Then the area consists of two triangles (see Figure~\ref{fig:area}) with heights
\begin{equation}\label{e:0527a}
h_1=\frac{|x_1|\tau}{|x_1|+|x_2|}\quad
\text{and}\quad
h_2=\frac{|x_2|\tau}{|x_1|+|x_2|}.
\end{equation}
Therefore,
\begin{equation}
A(x_1,x_2)=\frac{h_1|x_1|}{2}+\frac{h_2|x_2|}{2}
=\frac{\tau}{2}\Big(\frac{x_1^2+x_2^2}{|x_1|+|x_2|}\Big).
\end{equation}

Combining these two possibilities, we find that 
\begin{equation}\label{e:0728a}
A(x_1,x_2)
=\begin{cases}\displaystyle
\frac{\tau}{2}\bigg(\frac{x_1^2+x_2^2+2\max\{0,x_1x_2\}}{|x_1|+|x_2|}\bigg),&
\text{if}\quad (x_1,x_2)\neq (0,0);\\
0,&\text{otherwise}.
\end{cases}
\end{equation}
Because $\tau$ is fixed, our interest will be in the following function:
\begin{definition}[stadium norm]
The \emph{stadium norm} is defined by
\begin{equation}\label{e:std-0526}
f:\RR^2\to\RR:(x_1,x_2)\mapsto
\begin{cases} \displaystyle
\frac{x_1^2+x_2^2+2\max\{0,x_1x_2\}}{|x_1|+|x_2|},&
\text{if}\quad (x_1,x_2)\neq (0,0);\\
0,&\text{otherwise}.
\end{cases}
\end{equation}
\end{definition}
In fact, one can check that for every $\alpha>0$, the level set
$\menge{x\in\RR^2}{f(x)=\alpha}$ has the geometric shape of a stadium
(see Figure~\ref{fig:lvstd}). This motivates the name \emph{``stadium
norm''};
for the formal proof that $f$ is indeed a norm, see
Section~\ref{s:std-nrm&apprx} below. 

\begin{figure}[H]
\centering
\includegraphics{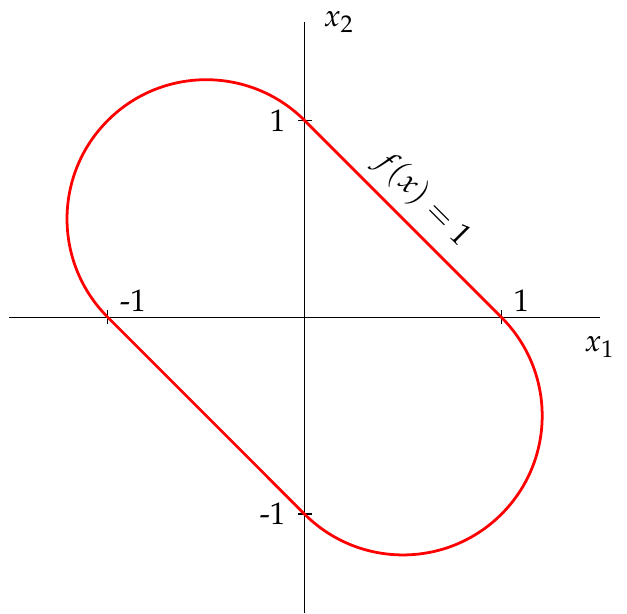}
\caption{A level set of the stadium norm.}
\label{fig:lvstd}
\end{figure}

\subsection{Upper approximations of the area}
\label{ss:challenging}
Since working with the true area \eqref{e:0728a} can be challenging
(see Section~\ref{ss:muchharder} below), we are also interested in simpler
approximations. 
Using the setting of Figure~\ref{fig:area},
we consider two approximations: the classical $\ell^1$-approximation
\begin{equation}\label{e:0526-l1}
A^{\ell}(x)=\frac{\tau}{2}\ell(x)
\quad\text{where}\quad
\ell(x):=\|x\|_1 = |x_1|+|x_2|;
\end{equation}
and the \emph{hexagonal stadium}\footnote{The level set of the function $h$ is a hexagon
(see Figure~\ref{fig:3in1}).} approximation
\begin{equation}\label{e:0526-h}
A^{\ell}(x)=\frac{\tau}{2}h(x_1,x_2)
\quad\text{where}\quad
h(x_1,x_2):=\max\big\{|x_1|,|x_2|,|x_1+x_2|\big\}.
\end{equation}
Both $A^{\ell}$ and $A^{\rm h}$ are \emph{upper approximations},
overestimating the true area $A$: $A^{\ell}\geq A^{\rm h}\geq A$ (see
Figure~\ref{fig:up}).
\begin{figure}[H] 
    \centering 
    \begin{subfigure}{.4\textwidth} 
        \centering 
        \includegraphics[]{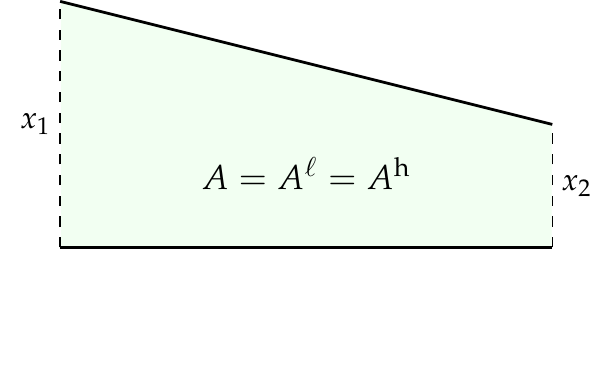} 
        \caption{$x_1x_2\geq 0$} 
        \label{fig:upA} 
    \end{subfigure}%
    \begin{subfigure}{.6\textwidth} 
        \centering 
        \includegraphics[]{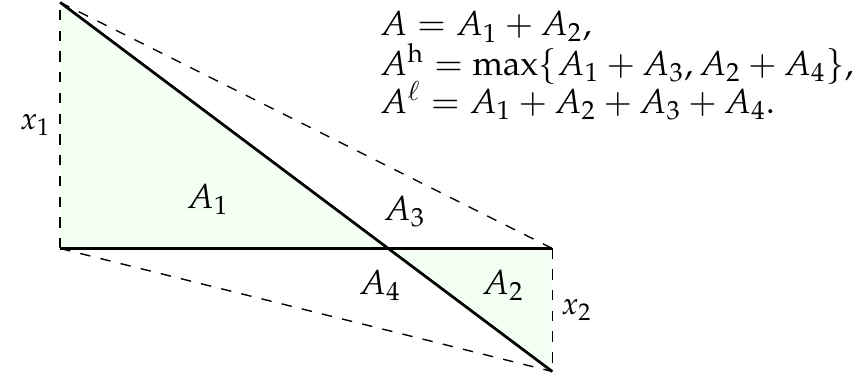} 
        \caption{$x_1x_2<0$} 
        \label{fig:upB} 
    \end{subfigure} 
\caption{$A^{\ell}$ and $A^{\rm h}$ are upper approximations for the area $A$.}
\label{fig:up}
\end{figure} 
In fact, the relationships among $A^{\ell}$, $A^{\rm h}$, and $A$
reflect those among $f$, $\ell$, and $h$, which we turn to now:

\begin{lemma}[upper approximations of the stadium norm]
\label{l:0526a} Consider the stadium norm $f$ from \eqref{e:std-0526}, 
the norm $\ell=\|\cdot\|_1$ from \eqref{e:0526-l1}, 
and the hexagonal stadium norm $h$ from \eqref{e:0526-h}.
Let $x = (x_1,x_2)\in\RR^2$. 
Then 
\begin{subequations}
\begin{equation}\label{e:0413ia}
f(x)\leq h(x)\leq \ell(x)
\end{equation}
and
\begin{equation}
\label{e:0413ia+}
f(x)= h(x)= \ell(x)
\quad\Leftrightarrow\quad
x_1x_2\geq 0.
\end{equation}
\end{subequations}
Moreover, 
\begin{equation}\label{e:0413ib}
\ell(x)-f(x)
\geq 2\big(h(x)-f(x)\big)
\end{equation}
and the constant $2$ is optimal. 
\end{lemma}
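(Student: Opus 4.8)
The plan is to exploit the symmetries shared by $f$, $h$, and $\ell$ to reduce everything to two sign cases, then verify each claim by elementary algebra. All three functions are even and invariant under interchanging $x_1$ and $x_2$, so it suffices to treat the two cases $x_1x_2\geq 0$ and $x_1x_2<0$, and within the latter I may further assume $|x_1|\geq|x_2|$.

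First, in the case $x_1x_2\geq 0$, I would observe that $|x_1+x_2|=|x_1|+|x_2|$, whence $h(x)=\max\{|x_1|,|x_2|,|x_1|+|x_2|\}=|x_1|+|x_2|=\ell(x)$; and since $\max\{0,x_1x_2\}=x_1x_2$ here, $f(x)=(x_1+x_2)^2/(|x_1|+|x_2|)=(|x_1|+|x_2|)^2/(|x_1|+|x_2|)=\ell(x)$. Thus $f=h=\ell$ throughout this region. This simultaneously settles the equality case of \eqref{e:0413ia}, the ``$\Leftarrow$'' direction of \eqref{e:0413ia+}, and equality in \eqref{e:0413ib}.

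Next, for $x_1x_2<0$ I would set $a=|x_1|$ and $b=|x_2|$ with $a\geq b>0$. Here $\max\{0,x_1x_2\}=0$, so $f(x)=(a^2+b^2)/(a+b)$, while $|x_1+x_2|=|a-b|\leq\max\{a,b\}=a$ collapses the three-term maximum to $h(x)=a$, and $\ell(x)=a+b$. The two inequalities in \eqref{e:0413ia} then reduce to $a\leq a+b$ (clear, and strict) and $(a^2+b^2)/(a+b)\leq a$, i.e.\ $b^2\leq ab$ (clear). Since $b>0$ forces $h=a<a+b=\ell$ strictly, no point with $x_1x_2<0$ can satisfy $f=h=\ell$, which gives the ``$\Rightarrow$'' direction of \eqref{e:0413ia+}. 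For \eqref{e:0413ib} I would recast the claim as $\ell+f\geq 2h$ and compute directly that $\ell(x)+f(x)-2h(x)=(b-a)+(a^2+b^2)/(a+b)=2b^2/(a+b)\geq 0$.

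Finally, for optimality of the constant $2$, note that $\ell-f\geq c(h-f)$ can be binding only where $h>f$, i.e.\ in the region $x_1x_2<0$ with $a>b>0$; there I would compute $\ell-f=2ab/(a+b)$ and $h-f=b(a-b)/(a+b)$, so the sharp constant is $\inf(\ell-f)/(h-f)=\inf 2a/(a-b)$ over $a>b>0$. Since $2a/(a-b)>2$ for all such $a,b$ while $2a/(a-b)\to 2$ as $b\to 0^+$, the infimum is exactly $2$ and cannot be improved. The whole argument is routine once the casework is set up; the only mildly delicate point is recognizing that $|a-b|\leq\max\{a,b\}$ reduces the defining maximum of $h$ to a single term in the opposite-sign case.
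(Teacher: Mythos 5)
Your proof is correct and follows essentially the same route as the paper's: casework on the sign of $x_1x_2$, reduction to $a=\max\{|x_1|,|x_2|\}$, $b=\min\{|x_1|,|x_2|\}$ in the opposite-sign case, and the same elementary computations (including the limit $b\to 0^+$ for optimality of the constant $2$). The only cosmetic difference is that you verify \eqref{e:0413ib} by rewriting it as $\ell+f\geq 2h$ and simplifying a single expression to $2b^2/(a+b)$, whereas the paper compares $\ell-f=2ab/(a+b)$ and $2(h-f)=2b(a-b)/(a+b)$ directly; these are the same argument.
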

\begin{proof}
\eqref{e:0413ia}: 
The second inequality is clear. 
To prove the first one, we consider two cases. 
Case 1: $x_1x_2\geq 0$. Then
$f(x)=|x_1|+|x_2|=|x_1+x_2|=\max\{|x_1|,|x_2|,|x_1+x_2|\}=h(x) =
\ell(x)$. 
Case 2: $x_1x_2<0$. Then 
$f(x)=\tfrac{x_1^2+x_2^2}{|x_1|+|x_2|}
\leq\max\{|x_1|,|x_2|\}\leq h(x)$.

\eqref{e:0413ia+}: 
This follows easily from the definitions.

\eqref{e:0413ib}: 
In view of \eqref{e:0413ia+}, the inequality is trivial
when $x_1x_2\geq 0$.
Thus, we assume that $x_1x_2<0$.
Set $M:=\max\{|x_1|,|x_2|\}$ and $m:=\min\{|x_1|,|x_2|\}$. 
Then
$f(x) = (m^2+M^2)/(m+M)$,
$h(x) = M$ and 
$\ell(x) = m+M$.
Hence if $\beta\in\RPP$, then 
\begin{equation}
\ell(x)-f(x) = m+M - \frac{m^2+M^2}{m+M} =
\frac{2mM}{m+M}
\end{equation}
and 
\begin{equation}
\beta\big(h(x)-f(x)\big)
= \beta\bigg(M - \frac{m^2+M^2}{m+M}\bigg) =
\frac{\beta m(M-m)}{m+M}. 
\end{equation}
This implies \eqref{e:0413ib} and we also conclude that
the constant $2$ is optimal. 
\end{proof}

\subsection{The signed area between two line segments}

\label{ss:signed}

We will now derive a formula for the
signed area $S_\tau(x_1,x_2)$ between two line segments $[(0,0),(\tau,0)]$
and $[(0,x_1),(\tau,x_2)]$ (see Figure~\ref{fig:area2b}). 
Consider, e.g., the case when 
$x_1>0$ and $x_2<0$. Using \eqref{e:0527a}, we have
\begin{equation}
S_\tau(x_1,x_2)=\frac{h_1 |x_1|}{2}-\frac{h_2|x_2|}{2}
=\frac{\tau}{2}\frac{x_1^2-x_2^2}{|x_1|+|x_2|}
=\frac{\tau}{2}\big(|x_1|-|x_2|\big)
=\frac{\tau}{2}(x_1+x_2).
\end{equation}
The remaining cases can be dealt with analogously; 
altogether, we then obtain the following simple formula 
for the signed area between the two line segments:
\begin{equation}
S_\tau(x_1,x_2)=\frac{\tau}{2}(x_1+x_2).
\end{equation}
\begin{figure}[H]
\centering
\includegraphics[]{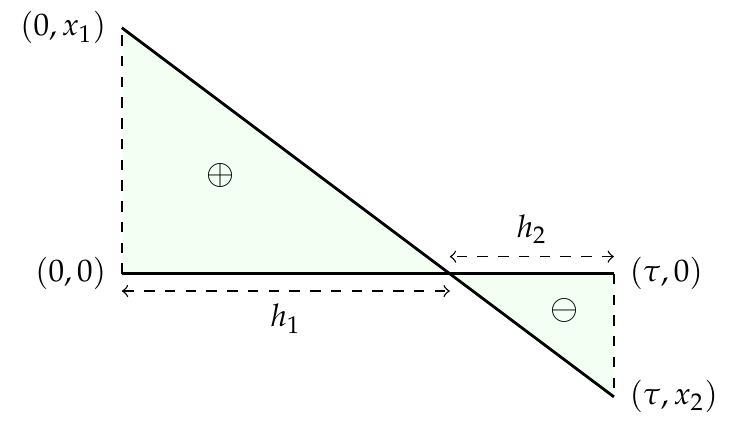}
\caption{Signed area between the two line segments}
\label{fig:area2b}
\end{figure}

\section{The stadium norm and its approximations}

We now justify our naming  convention by showing
that the stadium norm is actually a norm.
(For further recent results on checking convexity of piecewise-defined
functions, see \cite{BLP}.)

\label{s:std-nrm&apprx}

\begin{theorem}[stadium norm is indeed a norm]
\label{t:0526c}
Set 
\begin{equation}
\label{e:std-nrm}
f\colon\RR^2\to\RR\colon
(x_1,x_2)\mapsto
\begin{cases}
\displaystyle
\frac{x_1^2+x_2^2+2\max\{0,x_1x_2\}}{|x_1|+|x_2|},&\text{if
$(x_1,x_2)\neq (0,0)$;}\\
0,&\text{otherwise,}
\end{cases}
\end{equation}
and let $\Omega_1 = \RP\times\RP$, 
$\Omega_2 = \RM\times\RP$, 
$\Omega_3 = \RM\times\RM$, and
$\Omega_4 = \RP\times\RM$ denote the four closed quadrants in the Euclidean
plane. 
Then $f$ is a norm, called the \emph{stadium norm}, 
and continuously differentiable at every point
$(x_1,x_2)\in\RR^2\smallsetminus\{(0,0)\}$
with 
\begin{equation}\label{e:0324a}
\nabla f(x_1,x_2)=\begin{cases}
(1,1),&\text{if $(x_1,x_2)\in\Omega_1$;}\\[+2mm]
\displaystyle
\bigg(\frac{-x_1^2+2x_1x_2+x_2^2}{(x_1-x_2)^2},\frac{-x_1^2-2x_1x_2+x_2^2}{(x_1-x_2)^2}\bigg),
&\text{if $(x_1,x_2)\in \Omega_2$;}\\[+2mm]
(-1,-1), &\text{if $(x_1,x_2)\in\Omega_3$;}\\[+2mm]
\displaystyle
\bigg(\frac{x_1^2-2x_1x_2-x_2^2}{(x_1-x_2)^2},\frac{x_1^2+2x_1x_2-x_2^2}{(x_1-x_2)^2}\bigg),&\text{if
$(x_1,x_2)\in\Omega_4$.}
\end{cases}
\end{equation}
\end{theorem}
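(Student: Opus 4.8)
The plan is to read off the two easy requirements in the paper's definition of a norm (absolute homogeneity and vanishing only at the origin) and then concentrate everything on convexity, which is the one substantive axiom; the gradient formula I would treat separately at the end as a routine differentiation. The key algebraic observation is that the numerator of $f$ equals $(x_1+x_2)^2$ when $x_1x_2\geq 0$ and equals $x_1^2+x_2^2$ when $x_1x_2<0$, so in either case it is nonnegative and vanishes only at the origin; dividing by $|x_1|+|x_2|$ gives $f\geq 0$ with $f(x)=0\iff x=0$. Replacing $x$ by $\alpha x$ scales the numerator by $\alpha^2$ and the denominator by $|\alpha|$, yielding $f(\alpha x)=|\alpha|f(x)$ and in particular $f(-x)=f(x)$. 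Since $f$ is positively homogeneous and nonnegative, it is the gauge of $B=\menge{x}{f(x)\leq 1}$, so convexity of $f$ is equivalent to convexity of $B$, and that is what I would prove.

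The heart of the argument is an explicit description of $B$. On $\Omega_1$ and $\Omega_3$ one has $f(x)=|x_1+x_2|$, so $B$ meets these quadrants in the two triangles cut off by the parallel lines $x_1+x_2=\pm 1$. On $\Omega_2$ and $\Omega_4$ the inequality $f(x)\leq 1$ reads $x_1^2+x_2^2\leq |x_1|+|x_2|$, and completing the square turns this into the disks $(x_1+\tfrac12)^2+(x_2-\tfrac12)^2\leq\tfrac12$ and $(x_1-\tfrac12)^2+(x_2+\tfrac12)^2\leq\tfrac12$, each intersected with its quadrant. I would then recognize the resulting region as a stadium and, more usefully, identify it as the Minkowski sum $B=[c_2,c_4]+rD$, where $c_2=(-\tfrac12,\tfrac12)$ and $c_4=(\tfrac12,-\tfrac12)$ are the two disk centers, $D$ is the closed Euclidean unit disk, and $r=1/\sqrt2$: the two offset copies of the central segment are precisely the edges $x_1+x_2=\pm1$, and the two end-caps are the half-disks in $\Omega_2$ and $\Omega_4$. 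As a Minkowski sum of two convex sets, $B$ is convex, which completes the proof that $f$ is a norm. I expect this identification — verifying that the region assembled quadrant-by-quadrant genuinely coincides with the segment-plus-disk — to be the main obstacle; a fallback that avoids the geometric recognition is to check directly that the Hessian of the rational branch is positive semidefinite on each open quadrant, and to conclude convexity of $f$ along every line segment using the $C^1$-matching of the gradients at the axis-crossings.

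Finally, for the gradient formula I would differentiate the closed forms on each open quadrant: $f$ is the linear function $x_1+x_2$ on $\Omega_1$ and $-(x_1+x_2)$ on $\Omega_3$, giving $\pm(1,1)$, and is the rational function $(x_1^2+x_2^2)/(|x_1|+|x_2|)$ on $\Omega_2$ and $\Omega_4$, giving the two displayed expressions after the quotient rule. It then remains to establish $f\in C^1(\RR^2\setminus\{0\})$, for which I would verify that the rational gradients extend continuously to each half-axis and agree there with the linear ones: on every half-axis the rational formulas collapse to exactly $(1,1)$ or $(-1,-1)$. Hence the partial derivatives are continuous on the open set $\RR^2\setminus\{0\}$, so $f$ is continuously differentiable off the origin, where — being a norm — it is necessarily nondifferentiable.
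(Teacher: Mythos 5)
Your proposal is correct, but it takes a genuinely different route from the paper's proof. The paper argues directly on the function: $f$ is linear on $\Omega_1$ and $\Omega_3$, the Hessian of the rational branch is positive semidefinite on $\inte\Omega_2$ and $\inte\Omega_4$, and these local statements are glued together by a chain of gradient inequalities along any segment avoiding the origin, followed by a perturbation-and-limit step (translate the segment by $\ve w$, $w$ a unit normal, and let $\ve\to 0^+$) to cover segments through $(0,0)$, where $f$ is not differentiable. You instead reduce convexity of $f$ to convexity of the ball $B=\menge{x\in\RR^2}{f(x)\leq 1}$: since $f$ is nonnegative, positively homogeneous, and vanishes only at the origin, $f$ is the gauge of $B$, and the gauge of a convex set containing the origin is convex (see, e.g., \cite[Section~15]{Rocky70}); then $B$ is convex because it equals the Minkowski sum of the segment $[(-\tfrac{1}{2},\tfrac{1}{2}),(\tfrac{1}{2},-\tfrac{1}{2})]$ and the closed disk of radius $\tfrac{1}{\sqrt{2}}$. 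The identification you flag as the main obstacle does hold and is elementary: in $\Omega_1$ and $\Omega_3$ both sets reduce to the triangles cut off by $|x_1+x_2|\leq 1$ (the end-caps of the stadium are confined to $\Omega_2$ and $\Omega_4$), while in $\Omega_2$, writing $u=x_2-x_1\geq 0$ and $v=x_1+x_2$, the disk condition becomes $v^2\leq 2u-u^2$ and the quadrant itself forces $|v|\leq u$, so for $u\leq 1$ (where $u^2\leq 2u-u^2$) every point of the quadrant lies in both the disk and the strip of the stadium, and for $u>1$ the disk and stadium conditions agree by definition; $\Omega_4$ is symmetric. What your route buys: the one delicate point of the paper's proof --- convexity across the origin --- comes for free from the gauge reduction, and the argument simultaneously explains the name ``stadium norm''; what it costs is the reliance on gauge machinery and the explicit description of $B$. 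One caution: your fallback (Hessians plus $C^1$-matching across the axes) is essentially the paper's proof, but as you state it it is incomplete, because gradient matching yields convexity only along segments that avoid the origin; the paper's perturbation step (or, alternatively, the observation that on any line through the origin homogeneity gives $f(tx)=|t|f(x)$, which is convex in $t$) is needed to finish. Your treatment of \eqref{e:0324a} and of the $C^1$-property --- differentiate on the open quadrants and check that the rational gradients collapse to $(\pm 1,\pm 1)$ on the half-axes --- is exactly what the paper asserts when it says the identity ``follows easily from the definition.''
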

\begin{proof}
It is clear that $f$ is continuous and that $f$ is positively
homogeneous. 
The identity \eqref{e:0324a} follows easily from the definition
of $f$. 
Let $x=(x_1,x_2)\in\RR^2\smallsetminus\{(0,0)\}$.
If $x\in\Omega_1\cup\Omega_3$, then $f(x) = |x_1|+|x_2|$;
thus, $f|_{\Omega_1}$ and $f|_{\Omega_3}$ are obviously convex. 
If $x\in\inte\Omega_2$, then the Hessian of $f$ at $x$, 
\begin{equation}
\nabla^2 f(x)=\frac{4}{(x_2-x_1)^3}\begin{pmatrix}
x_2^2&-x_1x_2\\
-x_1x_2&x_1^2
\end{pmatrix}, 
\end{equation}
is positive semidefinite. 
It follows that $f|_{\inte\Omega_2}$ is convex and so
is $f|_{\Omega_2}$ by using the continuity of $f$ 
(see, e.g., \cite[Proposition~17.10 and Proposition~9.26]{BC}). 
The proof of the convexity of $f|_{\Omega_4}$ is similar. 

Now let $y\in\RR^2$ and assume that $(0,0)\notin [x,y]$. 
Then there exist points (not necessarily distinct) points 
$u$ and $v$ in $\RR^2$ such that
\begin{equation}
[x,y]=[x,u]\cup[u,v]\cup[v,y],
\end{equation}
with $[x,u]\subseteq A_1$, $[u,v]\subseteq A_2$, and $[v,y]\subseteq A_3$, 
where $\{A_1,A_2,A_3\}\subseteq \{\Omega_1,\Omega_2,\Omega_3,\Omega_4\}$. 
Note that $f$ is differentiable on $[x,y]$. 
We claim that
\begin{equation}\label{e:p0401a2}
\scal{\nabla f(x)}{y-u}\leq \scal{\nabla f(u)}{y-u}.
\end{equation}
Indeed, \eqref{e:p0401a2} is obvious when $x=u$. 
If $u\neq x$, then, since $f$ is convex in $A_1$, we have
\begin{subequations}
\begin{align}
\scal{\nabla f(x)}{y-u}
&=\tfrac{\|y-u\|}{\|u-x\|}\scal{\nabla f(x)}{u-x}\\
&\leq \tfrac{\|y-u\|}{\|u-x\|}\scal{\nabla f(u)}{u-x}
=\scal{\nabla f(u)}{y-u}.
\end{align}
\end{subequations}
Analogously, we see that 
\begin{equation}
\label{e:p0401a3}
\scal{\nabla f(u)}{y-v}\leq \scal{\nabla f(v)}{y-v}.
\end{equation}
Employing \eqref{e:p0401a2}, \eqref{e:p0401a3}, and the convexity
of $f|_{A_i}$, we deduce 
\begin{subequations}
\begin{align}
\scal{\nabla f(x)}{y-x}
&=\scal{\nabla f(x)}{u-x}+\scal{\nabla f(x)}{y-u}\\
&\leq f(u)-f(x)+\scal{\nabla f(u)}{y-u}\\
&=f(u)-f(x)+\scal{\nabla f(u)}{v-u}+\scal{\nabla f(u)}{y-v}\\
&\leq \big(f(u)-f(x)\big)+\big(f(v)-f(u)\big)+\scal{\nabla f(v)}{y-v}\\
&\leq \big(f(v)-f(x)\big)+\big(f(y)-f(v)\big)\\
&=f(y)-f(x).
\end{align}
\end{subequations}
To summarize, we have proven
\begin{equation}
\label{e:p0401a1}
(0,0)\notin [x,y] \;\;\Rightarrow\;\;
\scal{\nabla f(x)}{y-x} \leq f(y)-f(x).
\end{equation}

Now let $x$ and $y$ be in $\RR^2$ such that $x\neq y$, let
$\lambda\in[0,1]$, and set 
$z = (1-\lambda)x + \lambda y$.
It remains to show that 
\begin{equation}
\label{e:t0401b1}
f(z)\leq(1-\lambda)f(x)+\lambda f(y).
\end{equation}

\emph{Case~1}: $(0,0)\notin[x,y]$.\\
Then $(0,0)\notin[x,z]$ and $(0,0)\notin[z,y]$. 
Applying \eqref{e:p0401a1} twice, we obtain
\begin{equation}
\scal{\nabla f(z)}{x-z}\leq f(x)-f(z)
\quad\text{and}\quad
\scal{\nabla f(z)}{y-z}\leq f(y)-f(z).
\end{equation}
It follows that 
$(1-\lambda)\scal{\nabla f(z)}{x-z}\leq (1-\lambda)(f(x)-f(z))$
and
$\lambda\scal{\nabla f(z)}{y-z}\leq \lambda(f(y)-f(z))$, which
after adding and re-arranging turns into \eqref{e:t0401b1}. 

\emph{Case~2}: $(0,0)\in[x,y]$.\\
Let $w$ be a unit vector perpendicular to $[x,y]$, 
let $\varepsilon\in\RPP$, and set 
\begin{equation}
x_\ve=x+\ve w,\;\;
y_\ve=y+\ve w,
\;\;\text{and}\;\;
z_\ve=z+\ve w.
\end{equation}
It is clear that $(0,0)\not\in[x_\ve,y_\ve]$. 
So, applying Case~1 to $[x_\ve,y_\ve]$, we deduce that
\begin{equation}
f(z_\ve)\leq (1-\lambda)f(x_\ve)+\lambda f(y_\ve).
\end{equation}
Taking the limit as $\ve\to 0^+$ and using the continuity of $f$, we 
obtain \eqref{e:t0401b1}. 
\end{proof}

\begin{proposition}[dual stadium norm]
\label{p:0526d}
Consider the norm
\begin{equation}
g\colon \RR^2\to\RR \colon
(x_1,x_2)\mapsto \tfrac{1}{2}|x_1-x_2| +
\tfrac{1}{\sqrt{2}}\|(x_1,x_2)\|.
\end{equation}
Then the stadium norm $f$ given by \eqref{e:std-nrm} is the norm dual
to $g$.
\end{proposition}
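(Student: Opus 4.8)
The plan is to establish the equivalent statement $f_*=g$, where $f_*$ denotes the norm dual to $f$; since both are norms on the finite-dimensional space $\RR^2$, biduality then gives $f=(f_*)_*=g_*$, which is exactly the assertion. By \eqref{e:dualnorm}, $f_*$ is the support function of the closed unit ball $\uball=\menge{x\in\RR^2}{f(x)\le 1}$ (the supremum over $\{f=1\}$ agrees with the supremum over $\{f\le1\}$ because the integrand is linear). So the entire proof reduces to identifying $\uball$ geometrically and then reading off its support function.

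First I would determine $\uball$ quadrant by quadrant from the explicit formula \eqref{e:std-nrm}. On $\Omega_1\cup\Omega_3$ we have $x_1x_2\ge 0$, hence $f(x)=|x_1|+|x_2|$, so there the boundary of $\uball$ consists of the two segments of the lines $x_1+x_2=1$ and $x_1+x_2=-1$. On $\Omega_2$ we have $f(x)=(x_1^2+x_2^2)/(-x_1+x_2)$, so $f(x)=1$ is equivalent to $x_1^2+x_2^2=-x_1+x_2$, i.e.\ to $(x_1+\tfrac12)^2+(x_2-\tfrac12)^2=\tfrac12$; thus on $\Omega_2$ the boundary of $\uball$ is an arc of the circle centred at $(-\tfrac12,\tfrac12)$ of radius $\tfrac{1}{\sqrt2}$, and symmetrically on $\Omega_4$ it is an arc of the circle centred at $(\tfrac12,-\tfrac12)$ of the same radius. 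Assembling the four pieces, I would recognize $\uball$ as a stadium, namely the Minkowski sum $\uball=S+\tfrac{1}{\sqrt2}D$, where $S=[-c,c]$ is the segment determined by $c=(\tfrac12,-\tfrac12)$ and $D$ is the closed Euclidean unit disk.

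With the capsule description in hand, the conclusion follows from the additivity of support functions under Minkowski sums: $f_*=h_{\uball}=h_S+\tfrac{1}{\sqrt2}h_D$. Since $S$ is the symmetric segment $[-c,c]$, its support function is $h_S(y)=|\scal{c}{y}|=\tfrac12|y_1-y_2|$, while $h_D(y)=\|y\|$. Hence $f_*(y)=\tfrac12|y_1-y_2|+\tfrac{1}{\sqrt2}\|y\|=g(y)$, and biduality finishes the proof.

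The genuinely substantive step is the identification $\uball=S+\tfrac{1}{\sqrt2}D$; everything after it is a one-line support-function computation. Accordingly, I expect the main (though routine) obstacle to be the bookkeeping that verifies the Minkowski sum reproduces all four boundary pieces found above: that translating $S$ by $\pm\tfrac{1}{\sqrt2}$ along the unit normal $\tfrac{1}{\sqrt2}(1,1)$ lands on the lines $x_1+x_2=\pm1$, that the caps centred at the endpoints $\pm c$ coincide with the two circles, and that these glue together correctly at the four junction points $(0,1),(-1,0),(0,-1),(1,0)$. An alternative route, using Lemma~\ref{l:0509c} and the gradients \eqref{e:0324a} to describe the dual ball directly, is available but messier, since the resulting boundary curve is a non-circular quartic and the arc restriction in \eqref{e:dualnorm2} would then have to be handled explicitly; the support-function approach sidesteps this entirely.
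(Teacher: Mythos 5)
Your proof is correct, but it takes a genuinely different route from the paper's. The paper works head-on in the direction of the statement: using \eqref{e:dualnorm} it computes $g_*$ by solving $g(\xi,\eta)=1$ for $\eta$, which produces the two branches $\eta_\pm(\xi)=-\xi\pm 2\big(\sqrt{2\pm 2\xi}-1\big)$ on $[-1,1]$, then maximizes $\xi\mapsto x_1\xi+x_2\eta_\pm(\xi)$ by one-variable calculus and substitutes the critical points to recover $f$; the paper presents this only as a sketch and concedes that $g$ itself was ``guessed'' after the dual-ball projector of Subsection~\ref{ss:muchharder} had been computed. You instead compute $f_*$: you identify the unit ball of $f$ as the Minkowski sum $S+\tfrac{1}{\sqrt{2}}D$ of the segment $S=[-c,c]$, $c=(\tfrac12,-\tfrac12)$, with the scaled Euclidean disk, use additivity of support functions under Minkowski sums to get $f_*=h_S+\tfrac{1}{\sqrt{2}}h_D=g$, and conclude by biduality $f=(f_*)_*=g_*$, which is legitimate in finite dimensions. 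Your route buys conceptual clarity and rigor where the paper offers only a verification sketch: the computations reduce to completing the square and a one-line support-function evaluation instead of calculus with radicals, and the argument explains why $f$ deserves the name ``stadium norm'' (its ball literally is a stadium). The paper's route buys directness: no appeal to biduality and no need to recognize the shape of the ball, at the price of messier and less illuminating computations. Two details you should make explicit when carrying out the gluing step you flagged: first, on $\Omega_2$ the equation $x_1^2+x_2^2=|x_1|+|x_2|$ describes a circle passing through the origin, and the origin must be discarded from $\{f=1\}\cap\Omega_2$ since $f(0,0)=0$; what remains is exactly the closed semicircular cap of the capsule running from $(0,1)$ through $(-1,1)$ to $(-1,0)$ (similarly on $\Omega_4$); second, once the boundaries are matched, the identity $\uball=S+\tfrac{1}{\sqrt{2}}D$ follows because both sets are compact convex bodies, and two compact convex bodies with the same boundary coincide.
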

\begin{proof}
Let us sketch the derivation\footnote{We note in passing
that $g$ was not found until after we computed the projection
onto the dual ball of $f$ (see Subsection~\ref{ss:muchharder} below) 
and ``guessed'' the formula for $g$.}. 
It is easy to check that $g$ is indeed a norm. 
Denote the norm dual to $g$ by $g_*$. By 
If $g(\xi,\eta)=1$, then solving for $\eta$ yields
two solutions, namely
\begin{equation}
\eta_\pm(\xi) = -\xi \pm 2\big(\sqrt{2\pm 2\xi}-1\big),
\;\;\text{where $\xi\in[-1,1]$}. 
\end{equation}
Now let $(x_1,x_2)\in\RR^2$. 
Hence, using \eqref{e:dualnorm}, we have
\begin{subequations}
\label{e:researchgate}
\begin{align}
g_*(x_1,x_2) &= \sup\menge{x_1\xi+x_2\eta}{g(\xi,\eta)=1}\\
&= \max\Big\{ \max_{\xi\in[-1,1]}\big(x_1\xi+x_2\eta_+(\xi)\big), 
\max_{\xi\in[-1,1]}\big(x_1\xi+x_2\eta_-(\xi)\big)\Big\}. 
\end{align}
\end{subequations}
This reduces the problem to one-dimensional calculus. 
If $x_1\neq x_2$, then the 
the critical points of the functions
$\xi\mapsto x_1\xi+x_2\eta_+(\xi)$ and 
$\xi\mapsto x_1\xi+x_2\eta_-(\xi)$ 
are 
$\mp(x_1^2-2x_1x_2+x_2^2)/(x_1-x_2)^2$; otherwise
the critical points are the endpoints $\mp 1$. 
Substituting the critical points into 
\eqref{e:researchgate} yields indeed $g_*=f$.
\end{proof}

Let us summarize our finding in
the following result:

\begin{theorem}[the three norms]\label{t:0617a}
The following table summarizes the dual norms found for the 
three planar norms of interest (see also Figure~\ref{fig:3in1}).

\begin{tabular}{l|l|l}
{\rm Norm} $f$ & {\rm Formula for $f(x)$}
&{\rm Formula for $f_*(x)$} \\[+2mm]
\hline
{\rm $\ell = \|\cdot\|_1$} & $|x_1|+|x_2|$.
& $\max\big\{|x_1|,|x_2|\big\}$ \\[+2mm]
\hline
{\rm hexagonal stadium}
& $\max\big\{|x_1|,|x_2|,|x_1+x_2|\big\}$
& $\max\big\{|x_1|,|x_2|,|x_1-x_2|\big\}$ \\[+2mm] 
\hline
 & & \\[-4mm]
{\rm stadium}
& $\displaystyle \frac{x_1^2+x_2^2+2\max\{0,x_1x_2\}}{|x_1|+|x_2|}$
& $\tfrac{1}{2}|x_1-x_2| + \tfrac{1}{\sqrt{2}}\|(x_1,x_2)\|$
\end{tabular}

\end{theorem}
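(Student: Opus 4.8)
The plan is to verify the three rows of the table one at a time, in each case exploiting the duality machinery already in place, in particular the support-function formula \eqref{e:dualnorm2}, which lets one compute a dual norm as the supremum of $\scal{x^*}{\cdot}$ over any set $S$ whose convex hull is the primal unit ball. For the first row, $\ell=\|\cdot\|_1$, the unit ball is the diamond $\conv\{\pm(1,0),\pm(0,1)\}$, so I take $S=\{\pm(1,0),\pm(0,1)\}$ and read off from \eqref{e:dualnorm2} that $\ell_*(x^*)=\max\{|x_1|,|x_2|\}$; this is just the classical $\ell^1$--$\ell^\infty$ duality and needs no real work.

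The third row (the stadium norm) is essentially free given what precedes it. Proposition~\ref{p:0526d} already establishes that the stadium norm $f$ equals $g_*$, the norm dual to $g(x_1,x_2)=\tfrac12|x_1-x_2|+\tfrac1{\sqrt2}\|(x_1,x_2)\|$. Since $g$ is a norm on the finite-dimensional space $\RR^2$, biduality gives $g_{**}=g$, so that
\begin{equation}
f_*=(g_*)_*=g_{**}=g,
\end{equation}
which is exactly the formula recorded in the last row. Thus no new computation is required here, only the observation that the duality between norms is symmetric.

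The only row carrying genuine computational content is the second one, the hexagonal stadium norm $h(x_1,x_2)=\max\{|x_1|,|x_2|,|x_1+x_2|\}$. The plan is to describe its unit ball $\{h\le 1\}$ as the intersection of the three slabs $|x_1|\le 1$, $|x_2|\le 1$, $|x_1+x_2|\le 1$, a centrally symmetric hexagon, and to identify its six extreme points by intersecting the active boundary lines: I expect to obtain the vertex set
\begin{equation}
S=\big\{(1,0),\,(1,-1),\,(0,-1),\,(-1,0),\,(-1,1),\,(0,1)\big\}.
\end{equation}
Applying \eqref{e:dualnorm2} with this $S$ and writing $x^*=(x_1,x_2)$, the six inner products are $\pm x_1$, $\pm x_2$, and $\pm(x_1-x_2)$, whose maximum is $\max\{|x_1|,|x_2|,|x_1-x_2|\}$, as claimed. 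The main obstacle, such as it is, lies here: one must confirm that the six candidate points really are the extreme points of the hexagon, i.e.\ that $\conv S$ equals $\{h\le 1\}$ rather than a proper subset, so that \eqref{e:dualnorm2} applies. Once that verification is in hand, the remaining step is the elementary simplification of a maximum of six linear forms into the three absolute values.
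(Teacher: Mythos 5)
Your proposal is correct and follows essentially the same route as the paper: the $\ell^1$ and hexagonal rows via the support-function formula \eqref{e:dualnorm2} applied to the vertex sets $\{\pm(1,0),\pm(0,1)\}$ and $\{\pm(1,0),\pm(0,1),\pm(1,-1)\}$ respectively, and the stadium row by citing Proposition~\ref{p:0526d}. Your explicit appeal to biduality ($f_*=(g_*)_*=g_{**}=g$) is a point the paper leaves implicit, and is a welcome clarification rather than a deviation.
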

\begin{proof}
\emph{Case 1:} $f=\ell = \|\cdot\|_1$.\\
Of course, this case is well known,
we include the details because it is short and for completeness.
Note that its unit ball is 
$\conv\{\pm(1,0),\pm(0,1)\}$. 
Again \eqref{e:dualnorm2} yields
\begin{subequations}
\begin{align}
f_*(u_1,u_2) &=
\max\menge{u_1x_1+u_2x_2}{(x_1,x_2)\in\{\pm(1,0),\pm(0,1)\}}\\
&=\max\big\{\pm u_1,\pm u_2\big\}\\
&=\max\big\{|u_1|,|u_2|\big\}\\
&=\|(u_1,u_2)\|_\infty.
\end{align}
\end{subequations}

\emph{Case 2:} Hexagonal stadium norm.\\
Here $f(x) = \max\{|x_1|,|x_2|,|x_1+x_2|\}$.
Considering the unit sphere $f(x)=1$,
we compute that the unit ball is
$\conv\{\pm(-1,1),\pm(1,0),\pm(0,1)\}$. 
Now let $(u_1,u_2)\in\RR^2$. It follows from
\eqref{e:dualnorm2} that
\begin{subequations}
\begin{align}
f_*(u_1,u_2) &=
\max\menge{u_1x_1+u_2x_2}{(x_1,x_2)\in\{\pm(-1,1),\pm(1,0),\pm(0,1)\}}\\
&=\max\big\{ \pm(u_2-u_1),\pm u_1,\pm u_2\big\}\\
&=\max\big\{|u_1|,|u_2|,|u_1-u_2|\big\}.
\end{align}
\end{subequations}

\emph{Case 3:} $f$ is the stadium norm --- 
see Theorem~\ref{t:0526c} and Proposition~\ref{p:0526d}.  
\end{proof}

\begin{figure}[H]
\centering
\includegraphics[height=3in]{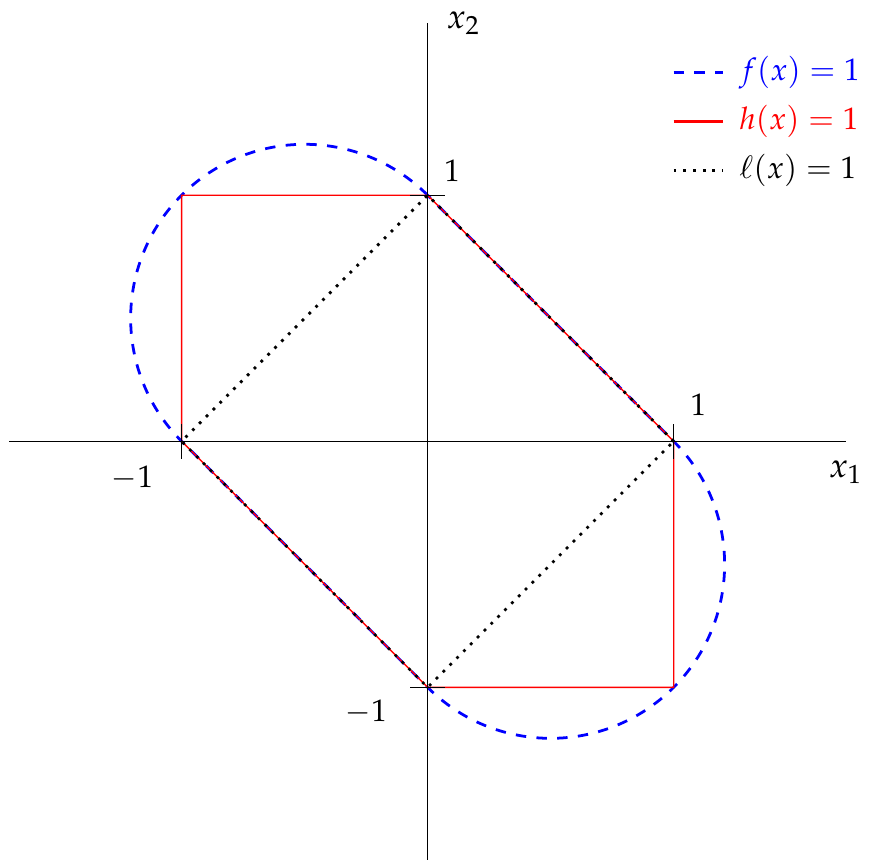}
\includegraphics[height=3in]{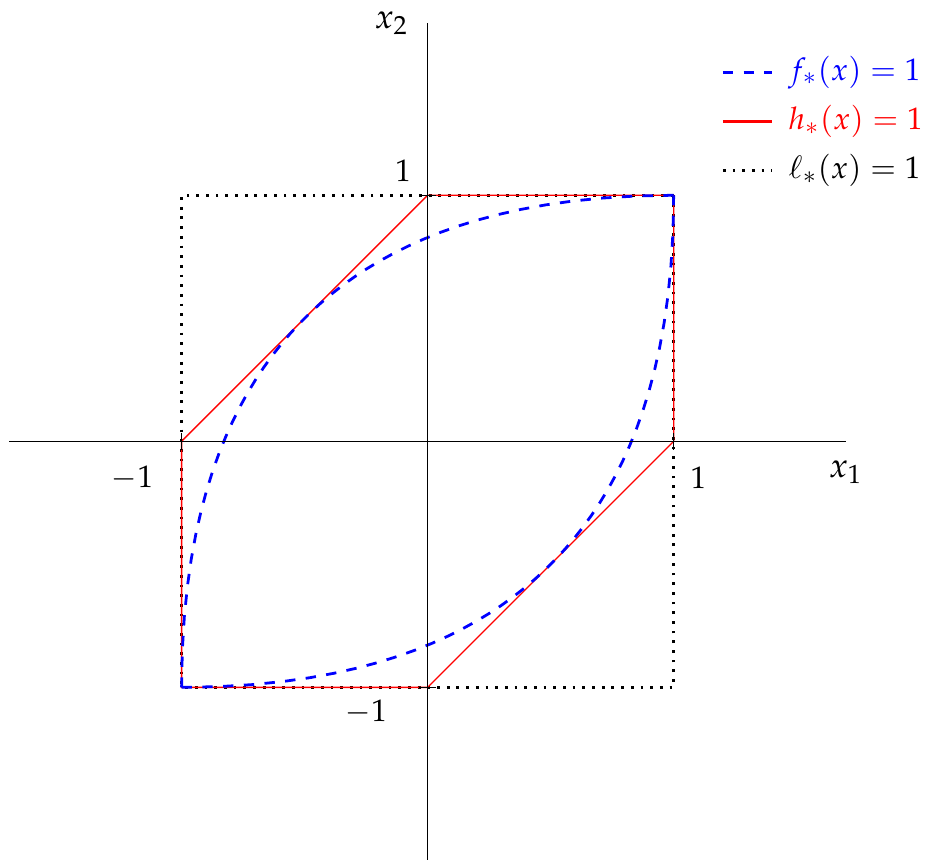}
\caption{Primal and dual balls of the stadium norm $f$, the hexagonal stadium norm $h$, 
and classical $\ell=\|\cdot\|_1$.}
\label{fig:3in1}
\end{figure}

\section{Proximity operators of some planar norms}

\subsection{Projectors onto the dual balls for two polyhedral
norms}

The following result is well known.

\begin{proposition}[dual $\|\cdot\|_1$ ball projector]
Let $\|\cdot\|_1\colon (x_1,x_2)\to|x_1|+|x_2|$
be the $\ell^1$ norm on $\RR^2$, 
denote its dual ball $[-1,1]\times[-1,1]$ by 
$\uball_*$, and 
let $x=(x_1,x_2)\in \RR^2$.
Then
\begin{equation}
\label{e:maxproj}
\prox_{\uball_*}(x_1,x_2)
=\big(\prox_{[-1,1]}(x_1),\prox_{[-1,1]}(x_1)\big).
\end{equation}
\end{proposition}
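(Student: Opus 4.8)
The plan is to exploit the product structure of the dual ball rather than to compute anything from scratch. First I would recall that the norm dual to $\|\cdot\|_1$ is the $\ell^\infty$-norm $\max\{|\cdot|,|\cdot|\}$ --- this is recorded in Theorem~\ref{t:0617a}, Case~1, and also follows directly from \eqref{e:dualnorm2} --- so that its closed unit ball is exactly the box $\uball_* = [-1,1]\times[-1,1]$. The decisive structural observation is that this set is a \emph{Cartesian product} of the two intervals $[-1,1]$, and projections onto products decouple.

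Next I would make the decoupling explicit via the separability of the squared Euclidean distance across the two coordinates. For any candidate $(y_1,y_2)\in\uball_*$ we have
\begin{equation}
\|(x_1,x_2)-(y_1,y_2)\|^2 = (x_1-y_1)^2 + (x_2-y_2)^2,
\end{equation}
while the constraint $(y_1,y_2)\in[-1,1]\times[-1,1]$ splits into the two independent constraints $y_1\in[-1,1]$ and $y_2\in[-1,1]$. Since the objective is a sum of two nonnegative terms, each depending on a single coordinate subject to its own constraint, the (unique) minimizer is obtained by minimizing each term separately. Hence the $i$-th coordinate of $\prox_{\uball_*}(x_1,x_2)$ is the minimizer of $y_i\mapsto(x_i-y_i)^2$ over $[-1,1]$, which is precisely $\prox_{[-1,1]}(x_i)$.

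Finally, I would invoke Example~\ref{ex:clip} to render each scalar projection in closed form as the clip $\prox_{[-1,1]}(x_i)=\max\{-1,\min\{1,x_i\}\}$, giving the claimed coordinatewise formula. There is no substantive obstacle here: the only step meriting care is the separability argument, namely justifying that a sum of nonnegative single-variable terms minimized over a product set is minimized factorwise; everything else is bookkeeping. (I note in passing that the second entry on the right-hand side of \eqref{e:maxproj} should read $\prox_{[-1,1]}(x_2)$; my plan establishes this corrected coordinatewise formula.)
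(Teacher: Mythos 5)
Your proof is correct. The paper itself offers no argument for this proposition --- it is simply stated as ``well known'' --- so there is nothing to diverge from; your separability argument (the dual ball is the Cartesian product $[-1,1]\times[-1,1]$, the squared distance splits coordinatewise, hence the projection decouples into two scalar clips handled by Example~\ref{ex:clip}) is precisely the standard justification one would supply. You are also right to flag the typo in \eqref{e:maxproj}: the second component should be $\prox_{[-1,1]}(x_2)$, not $\prox_{[-1,1]}(x_1)$, and your proof establishes the corrected formula.
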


\begin{proposition}[dual hexagonal stadium ball projector] \ \\
Let $ (x_1,x_2)\mapsto \max\{|x_1|,|x_2|,|x_1+x_2|\}$ be the hexagonal
stadium norm, 
denote its dual ball by $\uball_*$, 
and let $x=(x_1,x_2)\in\RR^2$. 
Then
\begin{equation}
\label{e:50s1}
\prox_{\uball_*}(x)=
\begin{cases}
x, &\text{if $x\in \uball_*$;}\\
\big(\prox_{[0,1]}(x_1),\prox_{[0,1]}(x_2)\big),
&\text{if $(x_1,x_2)\in(\RP\times\RP)\smallsetminus \uball_*$;}\\
\prox_{[(-1,0),(0,1)]}(x),
&\text{if $(x_1,x_2)\in(\RM\times\RP)\smallsetminus \uball_*$;}\\
\big(\prox_{[-1,0]}(x_1),\prox_{[-1,0]}(x_2)\big),
&\text{if $(x_1,x_2)\in(\RM\times\RM)\smallsetminus \uball_*$;}\\
\prox_{[(0,-1),(1,0)]}(x),
&\text{if $(x_1,x_2)\in(\RM\times\RP)\smallsetminus \uball_*$.}
\end{cases}
\end{equation}
Alternatively (and better suited to programming), 
we have 
\begin{equation}
\label{e:50s3}
\prox_{\uball_*}(x) =
\begin{cases}
x, &\text{if $f_*(x)\leq 1$;}\\
\big(\prox_{[-1,1]}(x_1),\prox_{[-1,1]}(x_2)\big), &\text{else if
$x_1x_2\geq 0$;}\\
\sgn(x_1)\big(\tfrac{1}{2},-\tfrac{1}{2}\big)
+\prox_{[-1,1]}(x_1+x_2)\big(\tfrac{1}{2},\tfrac{1}{2}\big),
&\text{else.}
\end{cases}
\end{equation}
\end{proposition}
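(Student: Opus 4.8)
The plan is to compute the projection onto the dual ball $\uball_*$ of the hexagonal stadium norm by exploiting the piecewise-polyhedral structure of $\uball_*$. From Theorem~\ref{t:0617a}, Case~2, the dual norm is $f_*(u_1,u_2)=\max\{|u_1|,|u_2|,|u_1-u_2|\}$, so the dual ball is the hexagon $\uball_* = \conv\{\pm(1,1),\pm(1,0),\pm(0,1)\}$ (equivalently, the set where all three of $|u_1|,|u_2|,|u_1-u_2|$ are $\leq 1$). The projector onto a polytope is determined by its normal-cone decomposition: $\RR^2$ splits into the regions that project onto the interior, onto each edge, and onto each vertex. I would first record the hexagon's vertices and edges, then partition the plane accordingly and identify, for each region, whether the nearest point lies in the hexagon's interior, on a specific edge (a line segment, handled by Lemma~\ref{l:projseg}), or at a vertex.

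\emph{First} I would dispose of the trivial case: if $x\in\uball_*$, i.e. $f_*(x)\leq 1$, then $\prox_{\uball_*}(x)=x$. \emph{Next}, for $x\notin\uball_*$, I would use the symmetry of the hexagon. The dual ball is symmetric under $x\mapsto -x$ and, because the defining constraints $|x_1|,|x_2|,|x_1-x_2|$ are permuted by the reflection $(x_1,x_2)\mapsto(x_2,x_1)$, also under that reflection. These symmetries mean it suffices to analyze the regions exterior to one or two adjacent edges and then transport the formula to the others. Working quadrant by quadrant as in the first display \eqref{e:50s1}: in $\RP\times\RP$ the relevant boundary is formed by the edges meeting the vertex $(1,1)$, and the projection clips coordinatewise into $[0,1]\times[0,1]$; in $\RM\times\RP$ the exterior point projects onto the edge $[(-1,0),(0,1)]$ via Lemma~\ref{l:projseg}; and the remaining two quadrants follow by the point-symmetry $x\mapsto-x$.

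\emph{To obtain} the programming-friendly form \eqref{e:50s3}, I would merge the clipping formulas. When $x_1x_2\geq 0$ (the point lies outside the hexagon but in the first or third quadrant), the active constraints are $|x_1|\leq 1$ and $|x_2|\leq 1$, so the projection is the coordinatewise clip $(\prox_{[-1,1]}(x_1),\prox_{[-1,1]}(x_2))$. When $x_1x_2<0$, the nearest face is one of the two slanted edges lying on the lines $x_1-x_2=\pm 1$; projecting onto such a line (then clipping to the segment) yields a point of the form $\sgn(x_1)(\tfrac12,-\tfrac12)+\prox_{[-1,1]}(x_1+x_2)(\tfrac12,\tfrac12)$, where the $\sgn(x_1)$ selects which of the two parallel slanted edges is active and the $\prox_{[-1,1]}$ enforces the segment endpoints. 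I would verify this by a direct computation of the orthogonal projection onto the line $x_1-x_2=\sigma$ with $\sigma=\sgn(x_1)$, checking that the midpoint parameter indeed reduces to $x_1+x_2$.

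\emph{The main obstacle} I anticipate is not any single computation but the bookkeeping of the normal-cone partition: one must check that the quadrant-based regions in \eqref{e:50s1} genuinely cover all of $\RR^2\smallsetminus\uball_*$ with the correct active face in each, and in particular that the boundaries between "project-to-vertex" and "project-to-edge" subregions are handled consistently (the clipping operators $\prox_{[0,1]}$ and $\prox_{[-1,1]}$ must automatically select the vertex when the foot of the perpendicular falls outside the segment). The cleanest way to dispatch this is to verify the optimality characterization $x-\prox_{\uball_*}(x)\in N_{\uball_*}(\prox_{\uball_*}(x))$ case by case, rather than re-deriving each region's nearest point from scratch; the symmetry reduction above keeps the number of genuinely distinct cases small.
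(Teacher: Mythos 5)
Your proposal is correct and takes essentially the same route as the paper: the paper's (very terse) proof likewise observes that $\uball_* = \conv\{\pm(1,0),\pm(0,1),\pm(1,1)\}$, derives \eqref{e:50s1} by considering each quadrant, and obtains \eqref{e:50s3} by case analysis together with the segment-projection formula \eqref{e:projseg}. Your normal-cone verification and symmetry reduction simply supply the details the paper leaves implicit (and your computation of the slanted-edge projection correctly reproduces the $\sgn(x_1)(\tfrac12,-\tfrac12)+\prox_{[-1,1]}(x_1+x_2)(\tfrac12,\tfrac12)$ form).
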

\begin{proof}
Formula~\eqref{e:50s1} follows from observing that
\begin{equation}
\uball_* = \conv\big\{ \pm(1,0),\pm(0,1),\pm(1,1)\big\},
\end{equation}
and by considering each quadrant. 
To obtain \eqref{e:50s3}, consider cases
and use \eqref{e:projseg}. 
\end{proof}

\subsection{Projector onto the dual ball of the stadium norm}

\label{ss:muchharder}

In this section, we derive the projector onto the dual ball of
the stadium norm. This will require significantly more work
than the two polyhedral norms just discussed.
We start by setting
\begin{subequations}
\begin{empheq}[box=\mywhitebox]{align}
\Gamma_0 \colon  \big[-\tfrac{\pi}{2},0\big] &\to \RR^2\\
 t &\mapsto 
\Big( \frac{\cos^2 t-2\sin t\cos t-\sin^2 t}{(\cos t-\sin t)^2},
\frac{\cos^2 t+2\sin t\cos t-\sin^2 t}{(\cos t-\sin t)^2}\Big). 
\end{empheq}
\end{subequations}
and
\begin{equation}
R = \ran \Gamma_0.
\end{equation}
In view of Lemma~\ref{l:0509c} and \eqref{e:0324a}, it follows
that 
\begin{equation}
\uball_*=\conv\big(R \cup (-R)\big).
\end{equation}
Using trigonometric identities, we see that
for every $t\in[-\pi/2,0]$ we have
\begin{equation}
\Gamma_0(t) 
=\Big(
\frac{\cos 2t-\sin 2t}{1-\sin2t},
\frac{\cos 2t+\sin 2t}{1-\sin2t}\Big)
=\Big(
\frac{\sqrt{2}\cos (2t+\frac{\pi}{4})}{1-\sin2t},
\frac{\sqrt{2}\sin (2t+\frac{\pi}{4})}{1-\sin2t}\Big).
\end{equation}
By changing variables, we thus see that 
\begin{subequations}
\begin{empheq}[box=\mywhitebox]{align}
\label{e:0413f}
\Gamma_1 \colon  \big[-\tfrac{3\pi}{4},\tfrac{\pi}{4}\big] &\to \RR^2\\
 t &\mapsto 
 \Big(
\frac{\sqrt{2}\cos t}{1-\sin(t-\pi/4)},
\frac{\sqrt{2}\sin t}{1-\sin(t-\pi/4)}\Big)\\
&=
\frac{\sqrt{2}}{1+\cos(t+\pi/4)}\big(\cos t,\sin t\big)
\end{empheq}
\end{subequations}
satisfies
\begin{equation}
R = \ran \Gamma_1.
\end{equation}
In \emph{polar coordinates} $(r,\omega)$,
the parametrizations of $\Gamma_1$ and $-\Gamma_1$ become
\begin{subequations}
\begin{align}
(\Gamma_1)&:\quad
r=\frac{\sqrt{2}}{1+|\cos(\omega+\pi/4)|}=\frac{\sqrt{2}}{1+\cos(\omega+\pi/4)},
\quad \omega\in\big[-\tfrac{3\pi}{4},\tfrac{\pi}{4}\big];\\
(-\Gamma_1)&:\quad
r=\frac{\sqrt{2}}{1+|\cos(\omega+\pi/4)|}=\frac{\sqrt{2}}{1-\cos(\omega+\pi/4)},
\quad \omega\in\big[\tfrac{\pi}{4},\tfrac{5\pi}{4}\big].
\end{align}
\end{subequations}
Now set 
\begin{subequations}
\begin{align}
A_1&:=\menge{(x_1,x_2)\in\RR^2}{x_1\geq 1,x_2\geq 1},\\
A_2&:=\menge{(x_1,x_2)\in\RR^2}{x_1>-1,x_2<1,x_2<x_1}.
\end{align}
\end{subequations}
Then, for every 
$x=(r\cos\omega,r\sin\omega)\in\RR^2$, 
we have 
\begin{equation}
\label{e:0413g}
P_{\uball_*}(x)=
\begin{cases}
(1,1),&\text{if $ x\in A_1$;}\\[+2mm]
(-1,-1),&\text{if $x\in -A_1$;}\\[+2mm]
x,&\text{if $\displaystyle
r\leq\frac{\sqrt{2}}{1+|\cos(\omega+\pi/4)|}$;}\\[+2mm]
\prox_R(x),&\text{if $x\in A_2$ ~and~
$\displaystyle r>\frac{\sqrt{2}}{1+\cos(\omega+\pi/4)}$;}\\[+2mm]
\prox_{-R}(x)=-\prox_{R}(-x),&\text{if $x\in -A_2$ ~and~ 
$\displaystyle r>\frac{\sqrt{2}}{1-\cos(\omega+\pi/4)}$.}
\end{cases}
\end{equation}
For a sketch, see Figure~\ref{f:2}.
\begin{figure}[H]
\centering
\includegraphics{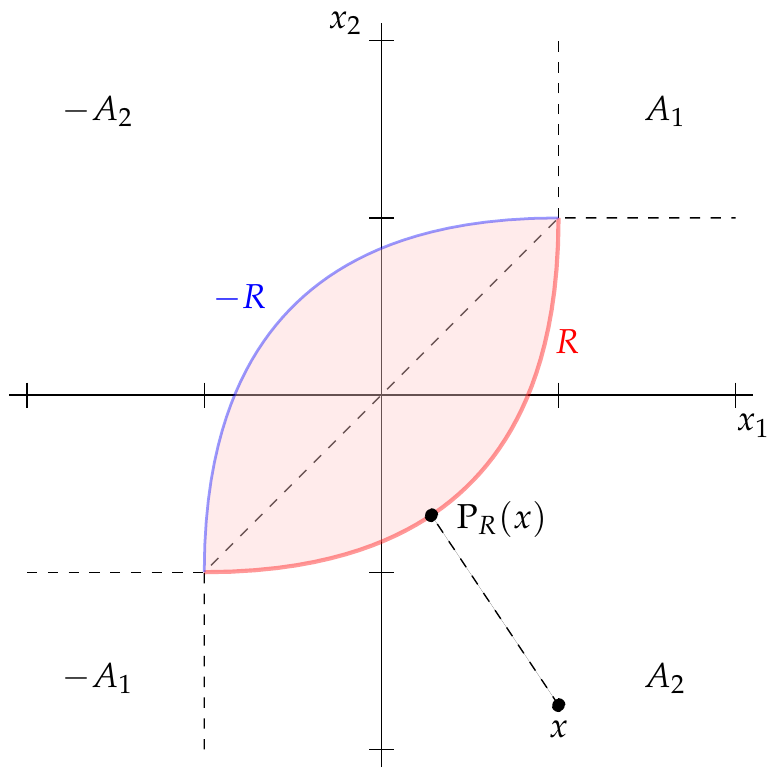}
\caption{Projection onto the dual stadium ball}
\label{f:2}
\end{figure}
Now suppose that 
\begin{equation}
\label{e:0510b1}
x\in A_2 \;\;\text{and}\;\;
r>\frac{\sqrt{2}}{1+\cos(\omega+\pi/4)}.
\end{equation}
Since $x\in A_2$, we have 
$\omega\in\left]-3\pi/4,\pi/4\right[$ and
thus $\cos(\omega+\pi/4)> 0$.
Denote the squared distance 
from $x=(r\cos\omega,r\sin\omega)$ to $\Gamma_1(t)$, where
$t\in[-3\pi/4,\pi/4]$ (see \eqref{e:0413f}) by 
\begin{align}
F(t)&=\Big(\tfrac{\sqrt{2}\cos t}{1+\cos(t+\pi/4)}-r\cos\omega\Big)^2+
\Big(\tfrac{\sqrt{2}\sin t}{1+\cos(t+\pi/4)}-r\sin\omega\Big)^2\\
&=2\Big(\tfrac{\cos t}{1+\cos(t+\pi/4)}
-\tfrac{r\cos\omega}{\sqrt{2}}\Big)^2+
2\Big(\tfrac{\sin t}{1+\cos(t+\pi/4)}-\tfrac{r\sin\omega}{\sqrt{2}}\Big)^2
\end{align}
We now claim that 
\begin{equation}\label{e:0510a}
\begin{cases}
&\text{$F$ is a convex function on $[-3\pi/4,\pi/4]$, and}\\
&\text{$F'(t)=0$ has a unique solution in
$\left]-3\pi/4,\pi/4\right[$.}
\end{cases}
\end{equation}
The critical number $t$ will then yield the  projection $P_R(x) =
\Gamma_1(t)$. 
We start by computing the derivative of $F$:
Indeed, 
\begin{subequations}
\begin{align}
F'(t)&=4\Big(\tfrac{\cos t}{1+\cos(t+\pi/4)}-\tfrac{r\cos\omega}{\sqrt{2}}\Big)
\tfrac{-\sin t(1+\cos(t+\pi/4))
+\sin(t+\pi/4)\cos t}{(1+\cos(t+\pi/4))^2}\\
&\quad+
4\Big(\tfrac{\sin t}{1+\cos(t+\pi/4)}-\tfrac{r\sin\omega}{\sqrt{2}}\Big)
\tfrac{\cos t(1+\cos(t+\pi/4))
+\sin(t+\pi/4)\sin t}{(1+\cos(t+\pi/4))^2}\\
&=4\Big(\tfrac{\cos t}{1+\cos(t+\pi/4)}-\tfrac{r\cos\omega}{\sqrt{2}}\Big)
\tfrac{-\sin t+\sin(\pi/4)}{(1+\cos(t+\pi/4))^2}\\
&\quad+
4\Big(\tfrac{\sin t}{1+\cos(t+\pi/4)}-\tfrac{r\sin\omega}{\sqrt{2}}\Big)
\tfrac{\cos t+\cos(\pi/4)}{(1+\cos(t+\pi/4))^2}\\
&=\tfrac{4}{(1+\cos(t+\pi/4))^2}
\Big(
\tfrac{\sin(t+\pi/4)}{1+\cos(t+\pi/4)}
+\tfrac{r(\sin(t-\omega)-\sin(\omega+\pi/4))}{\sqrt{2}}
\Big)
\end{align}
\end{subequations}
Setting 
\begin{equation}\label{e:0413e}
u:=t+\tfrac{\pi}{4}\in \left[-\tfrac{\pi}{2},\tfrac{\pi}{2}\right]
\quad\text{and}\quad\theta:=\omega+\tfrac{\pi}{4} \in\left]-\tfrac{\pi}{2},\tfrac{\pi}{2}\right[,
\end{equation}
we see that 
\begin{subequations}
\begin{align}
F'(t)&=\tfrac{4}{1+\cos u}\Big(\tfrac{\sin u}{(1+\cos u)^2}+
\tfrac{r}{\sqrt{2}}\tfrac{\sin(u-\theta)-\sin\theta}{(1+\cos u)}\Big)\\
&=\tfrac{4}{1+\cos u}\Big(\tfrac{\sin u}{(1+\cos u)^2}+
\tfrac{r}{\sqrt{2}}\tfrac{\sin u\cos \theta-\cos u\sin\theta-\sin\theta}{(1+\cos u)}\Big)\\
&=\tfrac{4}{1+\cos u}\Big(\tfrac{\sin u}{(1+\cos u)^2}+
\tfrac{r}{\sqrt{2}}\tfrac{\sin u\cos \theta}{(1+\cos u)}-\tfrac{r\sin\theta}{\sqrt{2}}\Big).
\end{align}
\end{subequations}
Furthermore, set 
\begin{equation}\label{e:0413d}
s:=\tan(u/2)\in[-1,1],\ \alpha:=\tfrac{r}{\sqrt{2}}\cos\theta>0,\ \text{and}\ \beta:=\tfrac{r}{\sqrt{2}}\sin\theta.
\end{equation}
Then $\tfrac{1}{1+\cos u}=\tfrac{1}{2\cos^2(u/2)}=\tfrac{1+s^2}{2}$ and
\begin{align}
F'(t)&=\tfrac{4}{2\cos^2(u/2)}
\Big(\tfrac{2\sin (u/2)\cos(u/2)}{4\cos^4(u/2)}
+\tfrac{2\alpha\sin(u/2)\cos(u/2)}{2\cos^2(u/2)}
-\beta\Big)\\
&=2(1+s^2)\Big(\tfrac{1}{2}s(1+s^2) +\alpha s-\beta\Big)\\
&=(1+s^2)\big(s^3+(1+2\alpha)s-2\beta\big). 
\end{align}
Let
\begin{equation}
G\colon[-1,1]\to\RR\colon s\mapsto (1+s^2)\big(s^3+(1+2\alpha)s-2\beta\big)
\end{equation}
and consider the equation 
\begin{equation}\label{e:0413a}
G(s)=0. 
\end{equation}
Since $x=(x_1,x_2)\in A_2$, we have $x_1=r\cos\omega> -1$ and 
$x_2=r\sin\omega< 1$. 
Hence
\begin{subequations}\label{e:0413a2}
\begin{align}
\alpha-\beta &= \tfrac{r}{\sqrt{2}}(\cos\theta-\sin\theta)
=-r\sin\omega>-1,\\
\alpha+\beta &=\tfrac{r}{\sqrt{2}}(\cos\theta+\sin\theta)
=r\cos\omega>-1;
\end{align}
\end{subequations}
consequently, 
\begin{equation}
G(1)=2(2+2\alpha-2\beta)>0
\quad\text{and}\quad
G(-1)=2(-2-2\alpha-2\beta)<0.
\end{equation}
Since $G$ is clearly continuous, it follows that 
\eqref{e:0413a} has a solution in $\left]-1,1\right[$.
We now compute
\begin{equation}
G'(s)=5s^4+6(1+\alpha)s^2-4\beta s+(1+2\alpha) 
\end{equation}
and observe that the discriminant of the quadratic polynomial
$6(1+\alpha)s^2-4\beta s+(1+2\alpha)$ is
$\Delta := 16\beta^2-24(1+\alpha)(1+2\alpha)$.
Because $|\beta|<1+\alpha<1+2\alpha$ (by \eqref{e:0413a2}), 
it is clear that $\Delta<0$. 
Hence $6(1+\alpha)s^2-4\beta s+(1+2\alpha) > 0$ and therefore
$G'$ is strictly positive on $\left]-1,1\right[$. 
We deduce that $G$ is \emph{strictly increasing} on $[-1,1]$.
So the solution of \eqref{e:0413a} is unique. 
In turn, this implies that 
$F'$ strictly increases on
$[-3\pi/4,\pi/4]$. 
It follows that $F$ is a convex function on $[-3\pi/4,\pi/4]$
and that $F'(t)=0$ has a unique solution in $\left]-3\pi/4,\pi/4\right[$.
Therefore, $F$ has a unique minimizer
in $\left]-3\pi/4,\pi/4\right[$, which establishes our claim 
\eqref{e:0510a}.

Now let $s$ be the unique solution of \eqref{e:0413a}, which
implies that $s$ is a real solution of
\begin{equation}
\label{e:0413b}
s^3+(1+2\alpha)s-2\beta=0.
\end{equation}
This real solution is unique because viewed as function in $s$,
the derivate of the left-hand side of \eqref{e:0413b} is 
$3s^2+(1+2\alpha)>0$ since $\alpha>0$.
Cardano's formula gives 
\begin{equation}\label{e:0413c}
\sqrt[3]{\beta+\sqrt{\beta^2+(\tfrac{1+2\alpha}{3})^3}}
+\sqrt[3]{\beta-\sqrt{\beta^2+(\tfrac{1+2\alpha}{3})^3}}
\end{equation}
as a solution to \eqref{e:0413b}.
This solution is a real number, again since $\alpha>0$.
Hence $s$ is equal to \eqref{e:0413c}. 
Let us summarize what we have found out so far:
If 
\begin{subequations}
\label{e:sofar1}
\begin{equation}
\label{e:sofar1a}
x = r(\cos\omega,\sin\omega) \in A_2 \;\;\text{and}\;\;
r>\frac{\sqrt{2}}{1+\cos(\omega+\pi/4)},
\end{equation}
and 
\begin{align}
\alpha&=\tfrac{r}{\sqrt{2}}\cos(\omega+\pi/4),\;\;
\label{e:sofar1b}
\beta=\tfrac{r}{\sqrt{2}}\sin(\omega+\pi/4),\\[+2mm]
\label{e:sofar1c}
s&=\sqrt[3]{\beta+\sqrt{\beta^2+(\tfrac{1+2\alpha}{3})^3}}
+\sqrt[3]{\beta-\sqrt{\beta^2+(\tfrac{1+2\alpha}{3})^3}}\
\in\left]-1,1\right[,\\[+2mm]
\label{e:sofar1d}
t&=2\arctan(s)-\tfrac{\pi}{4}\in\left]-\tfrac{3\pi}{4},\tfrac{\pi}{4}\right[,
\end{align}
then
\begin{equation}
\label{e:sofar1e}
P_R(x)=\frac{\sqrt{2}}{1+\cos(t+\pi/4)}(\cos t,\sin t).
\end{equation}
\end{subequations}
Our next goal is to simplify \eqref{e:sofar1} by eliminating
the trigonometric functions.
To this end,
let $(x_1,x_2)=r(\cos\omega,\sin\omega)\in A_2$.
We translate \eqref{e:sofar1} to a form that is free of
trigonometric functions. 
Observe first that 
\begin{subequations}
\label{e:0510c}
\begin{align}
&r\cos(\omega+\pi/4)=\tfrac{1}{\sqrt{2}}r\cos\omega
-\tfrac{1}{\sqrt{2}}r\sin\omega=\tfrac{1}{\sqrt{2}}(x_1-x_2)>0\\
\text{and}\quad
&r\sin(\omega+\pi/4)=\tfrac{1}{\sqrt{2}}r\cos\omega
+\tfrac{1}{\sqrt{2}}r\sin\omega=\tfrac{1}{\sqrt{2}}(x_1+x_2).
\end{align}
\end{subequations}
Hence \eqref{e:sofar1b} turns into
\begin{equation}
\alpha = \tfrac{1}{2}(x_1-x_2)
\;\;\text{and}\;\;
\beta= \tfrac{1}{2}(x_1+x_2).
\end{equation}
Furthermore, since
\begin{equation}
\frac{\sqrt{2}}{r\big(1+\cos(\omega+\pi/4)\big)}
=\frac{\sqrt{2}}{r+r\cos(\omega+\pi/4)}
=\frac{2}{\sqrt{2(x_1^2+x_2^2)}+x_1-x_2},
\end{equation}
we see that the inequality in \eqref{e:sofar1a} is equivalent to 
\begin{equation}
\label{e:0510b2}
\sqrt{2(x_1^2+x_2^2)}+x_1-x_2>2.
\end{equation}
Next, let $s$ and $t$ be as in
\eqref{e:sofar1c}--\eqref{e:sofar1d}.
Using
\begin{equation}
\cos(\arctan s)=\frac{1}{\sqrt{1+s^2}}\quad\text{and}\quad
\sin(\arctan s)=\frac{s}{\sqrt{1+s^2}},
\end{equation}
we have
\begin{subequations}
\begin{align}
\cos(t+\pi/4)&=\cos(2\arctan s)
=\cos^2(\arctan s)-\sin^2(\arctan s)
=\frac{1-s^2}{1+s^2},\\
\sin(2\arctan s)
&=2\sin(\arctan s)\cos(\arctan s)
=\frac{2s}{1+s^2}.
\end{align}
\end{subequations}
It follows that
\begin{subequations}
\begin{align}
\cos t
&=\tfrac{1}{\sqrt{2}}\big(\cos(2\arctan s)+\sin(2\arctan s)\big)
=\frac{1+2s-s^2}{\sqrt{2}(1+s^2)},\\
\sin t
&=\tfrac{1}{\sqrt{2}}\big(\sin(2\arctan s)-\cos(2\arctan s)\big)
=\frac{-1+2s+s^2}{\sqrt{2}(1+s^2)}.
\end{align}
\end{subequations}
Finally, \eqref{e:sofar1e} turns into
\begin{equation}
\prox_R(x)=\frac{\sqrt{2}}{1+\tfrac{1-s^2}{1+s^2}}
\bigg(\frac{1+2s-s^2}{\sqrt{2}(1+s^2)},
\frac{-1+2s+s^2}{\sqrt{2}(1+s^2)}\bigg)=
\bigg(\frac{1+2s-s^2}{2},\frac{-1+2s+s^2}{2} \bigg).
\end{equation}
Since $\prox_{-R}(x) = -\prox_{R}(-x)$, we can handle the case
when $-x\in A_2$ analogously. 

We are now in a position to summarize this section in
the following result:

\begin{theorem}[dual stadium ball projector]
\label{t:dualBf}
Let 
\begin{equation}
\label{e:dualBf}
f\colon\RR^2\to\RR\colon
(x_1,x_2)\mapsto
\begin{cases}
\displaystyle
\frac{x_1^2+x_2^2+2\max\{0,x_1x_2\}}{|x_1|+|x_2|},&\text{if
$(x_1,x_2)\neq (0,0)$;}\\
0,&\text{otherwise,}
\end{cases}
\end{equation}
be the stadium norm, denote its dual ball by $\uball_*$,
and let $x=(x_1,x_2)\in\RR^2$. 
Set 
\begin{subequations}
\begin{equation}
\alpha:=\tfrac{1}{27}\big(1+|x_1-x_2|\big)^3,\;\;
\beta:=\tfrac{1}{2}\sgn(x_1-x_2)(x_1+x_2),
\end{equation}
and 
\begin{equation}
s:=\sqrt[3]{\beta+\sqrt{\beta^2+\alpha}}
+\sqrt[3]{\beta-\sqrt{\beta^2+\alpha}}.
\end{equation}
\end{subequations}
Then
\begin{equation}
\label{e:stadiumdualproj}
\prox_{\uball_*}(x)=
\begin{cases}
(1,1),&\text{if $x_1\geq 1$ and $x_2\geq 1$;}\\
(-1,-1),&\text{if $x_1\leq -1$ and $x_2\leq -1$;}\\
(x_1,x_2),&\text{if $\sqrt{2(x_1^2+x_2^2)}+|x_1-x_2|\leq 2$;}\\
\displaystyle \sgn(x_1-x_2)\bigg(\frac{1+2s-s^2}{2},\frac{-1+2s+s^2}{2}
\bigg),&\text{otherwise.}
\end{cases}
\end{equation}
\end{theorem}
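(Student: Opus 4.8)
The plan is to read off Theorem~\ref{t:dualBf} from the regional projection formula \eqref{e:0413g} together with the trigonometric-to-algebraic reduction carried out in and after \eqref{e:sofar1}. The representation $\uball_*=\conv(R\cup(-R))$ and the case analysis \eqref{e:0413g} already partition $\RR^2$ into five regions and name the projector on each, so the whole task reduces to matching those five branches to the four branches of \eqref{e:stadiumdualproj} and to verifying that the parameters $\alpha,\beta,s$ in the statement encode the earlier ones.

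First I would dispose of the three easy branches. The corner regions $A_1$ and $-A_1$ are precisely $\{x_1\geq 1,\,x_2\geq 1\}$ and $\{x_1\leq -1,\,x_2\leq -1\}$, and \eqref{e:0413g} assigns them the constant values $(1,1)$ and $(-1,-1)$, reproducing the first two cases verbatim. The ``inside the ball'' branch returns $x$; its condition $r\leq \sqrt2/(1+|\cos(\omega+\pi/4)|)$ becomes, after clearing denominators and using \eqref{e:0510c}, exactly $\sqrt{2(x_1^2+x_2^2)}+|x_1-x_2|\leq 2$, which is the third case. The essential point is that the absolute value already present in \eqref{e:0413g} fuses the $A_2$ and $-A_2$ sides, which is why the algebraic inequality carries $|x_1-x_2|$ rather than $x_1-x_2$.

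The substance is the fourth branch, and here the analytic work is already done. For $x\in A_2$ I would quote \eqref{e:sofar1e} and its reduction to $\prox_R(x)=\big((1+2s-s^2)/2,\,(-1+2s+s^2)/2\big)$, where $s$ is the unique real root of the depressed cubic \eqref{e:0413b}. By \eqref{e:0510c} the derivation's parameters are $\tfrac12(x_1-x_2)$ and $\tfrac12(x_1+x_2)$, so Cardano's radical \eqref{e:0413c} agrees with the displayed $s$ once one checks $\big(\tfrac{1+(x_1-x_2)}{3}\big)^3=\tfrac1{27}(1+|x_1-x_2|)^3=\alpha$ on $A_2$, which is exactly where the statement's $\alpha$ originates. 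For $x\in -A_2$ I would use the symmetry $\prox_{-R}(x)=-\prox_R(-x)$ recorded in \eqref{e:0413g}: replacing $x$ by $-x$ leaves $|x_1-x_2|$ (hence $\alpha$) untouched but negates $x_1+x_2$, so both adjustments are encoded by inserting the single factor $\sgn(x_1-x_2)$ inside $\beta$ and in front of the output vector. Confirming that this one $\sgn$ placement reproduces both the $A_2$ formula (where $x_1>x_2$) and the reflected $-A_2$ formula is the lone bookkeeping step that needs care.

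I do not anticipate a genuine analytic obstacle, since convexity of $F$, the strict monotonicity of $G$ that forces a unique real root in $\opint{-1,1}$, and the passage to Cardano's formula were all established in Section~\ref{ss:muchharder}. The only delicate point will be checking that the five regions of \eqref{e:0413g} and the four predicates of \eqref{e:stadiumdualproj} tile $\RR^2$ compatibly --- in particular that the diagonal $x_1=x_2$ and the boundary of $\uball_*$ receive consistent values, which follows from continuity of the projector --- so that the final piecewise formula is unambiguous.
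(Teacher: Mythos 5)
Your proposal is correct and follows essentially the same route as the paper: the theorem in the paper carries no separate proof precisely because it is stated as a summary of the derivation in Subsection~\ref{ss:muchharder}, namely the regional formula \eqref{e:0413g} plus the trigonometry-free reduction culminating in \eqref{e:sofar1}, which is exactly what you assemble. Your bookkeeping of the $\sgn(x_1-x_2)$ factor (absorbing both the $A_2$ case and the reflected $-A_2$ case via $\prox_{-R}(x)=-\prox_R(-x)$) and the identification $\big(\tfrac{1+2\cdot\frac12|x_1-x_2|}{3}\big)^3=\tfrac{1}{27}(1+|x_1-x_2|)^3$ are both accurate, so nothing is missing.
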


\subsection{Proximity operators}

Combining 
Lemma~\ref{l:fuckingback1} with the
formulae derived with in
\eqref{e:maxproj}, \eqref{e:50s1}--\eqref{e:50s3},
and \eqref{e:stadiumdualproj},
we are now able to summarize the findings of this section.

\begin{theorem}[planar proximity operators]
\label{t:pla-prx}
Let $f\colon \RR^2\to\RR$ be a norm, and denote its dual ball by
$\uball_*$.
Let $\alpha$ and $\gamma$ be in $\RPP$, let $w\in X$,
and set $h\colon X\to\RR\colon x\mapsto \alpha f(x-w)$.
Then 
\begin{equation}
(\forall x\in X)\quad 
\prox_{\gamma h}(x)=x-\gamma\alpha \prox_{\uball_*}
(\tfrac{x-w}{\gamma\alpha})
\quad\text{and}\quad
\prox_{\gamma h^*}(x)=\alpha \prox_{\uball_*}(\tfrac{x-\gamma w}{\alpha}).
\end{equation}
Let $x\in\RR^2\smallsetminus\{(0,0)\}$. 
The following table summarizes several choices that will be used
later.

\begin{tabular}{l|l|l}
{\rm Norm} $f$ & {\rm Formula for $f(x)$}
&{\rm Formula for $\prox_{\uball_*}(x)$} \\[+2mm]
\hline
{\rm $\ell = \|\cdot\|_1$} & $|x_1|+|x_2|$.
& {\rm see \eqref{e:maxproj}} \\[+2mm]
\hline
{\rm hexagonal stadium}
& $\max\big\{|x_1|,|x_2|,|x_1+x_2|\big\}$
& {\rm see \eqref{e:50s1} or \eqref{e:50s3}} \\[+2mm] 
\hline
 & & \\[-4mm]
{\rm stadium}
& $\displaystyle \frac{x_1^2+x_2^2+2\max\{0,x_1x_2\}}{|x_1|+|x_2|}$
& {\rm see \eqref{e:stadiumdualproj}}
\end{tabular}
\end{theorem}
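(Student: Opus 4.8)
The plan is to recognize that the two displayed identities are nothing but the conclusion of Lemma~\ref{l:fuckingback1}, now specialized to the Hilbert space $X=\RR^2$. That lemma already establishes, for an arbitrary norm $f$ with dual ball $\uball_*$, the formulas $\prox_{\gamma h}(x)=x-\gamma\alpha\,\prox_{\uball_*}(\tfrac{x-w}{\gamma\alpha})$ and $\prox_{\gamma h^*}(x)=\alpha\,\prox_{\uball_*}(\tfrac{x-\gamma w}{\alpha})$ for every $x$, under exactly the hypotheses $\alpha,\gamma\in\RPP$ and $w\in X$ imposed here. Hence the first part of the theorem requires no further argument beyond invoking Lemma~\ref{l:fuckingback1}; nothing about the dimension two is used in its derivation.

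For the table, the content is purely a matter of assembly: each row pairs one of the three planar norms of interest with the projector onto its \emph{dual} ball, all of which have already been computed in this section. First I would record the defining formula for $f(x)$, which is immediate from the respective definition. Then, in the column listing $\prox_{\uball_*}(x)$, I would simply point to where that projector was derived: \eqref{e:maxproj} for $\ell=\|\cdot\|_1$ (whose dual ball is the $\ell^\infty$ box), \eqref{e:50s1} or the programming-friendly \eqref{e:50s3} for the hexagonal stadium norm, and the closed form \eqref{e:stadiumdualproj} of Theorem~\ref{t:dualBf} for the stadium norm itself. Substituting any one of these three projectors into the two general formulas then yields explicit, closed-form proximity operators $\prox_{\gamma h}$ and $\prox_{\gamma h^*}$.

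The one point worth checking---though it is not a genuine obstacle---is that each cited projector is the projector onto the \emph{dual} ball $\uball_*$, matching the role it plays in the two general formulas, rather than onto the primal unit ball. This is precisely how these projectors were set up in the earlier propositions and in Theorem~\ref{t:dualBf}, so the three rows are consistent with the general identity. In short, since all of the analytic work---the convexity proof establishing that the stadium norm is a norm, the Cardano-based projection onto its dual leaf, and the polyhedral projections---was carried out upstream, this final theorem is a consolidation rather than a new derivation, and its proof reduces to citing Lemma~\ref{l:fuckingback1} together with \eqref{e:maxproj}, \eqref{e:50s1}--\eqref{e:50s3}, and \eqref{e:stadiumdualproj}.
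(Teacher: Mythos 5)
Your proposal is correct and matches the paper's own treatment exactly: the paper likewise presents this theorem as a consolidation, obtained by "combining Lemma~\ref{l:fuckingback1} with the formulae derived in \eqref{e:maxproj}, \eqref{e:50s1}--\eqref{e:50s3}, and \eqref{e:stadiumdualproj}," with no further argument. Your added remark that each cited projector is onto the \emph{dual} ball, so the rows plug consistently into the general identities, is a sound sanity check but not a departure from the paper's route.
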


\section{Proximity operators in $\RR^n$ related to area}

\label{s:rarea}

Let 
\begin{equation}
t=(t_1,\ldots,t_n)\in X=\RR^n\quad\text{such that}\quad
t_1<\cdots<t_n.
\end{equation}
Fix $w=(w_1,\ldots,w_n)\in X$ and let $x=(x_1,\ldots,x_n)\in X$.
In this section, we first develop a formula for the area between the two
linear splines $l_{(t,x)}$ and $l_{(t,w)}$ (see
\eqref{e:l(t,x)}) and then provide related proximity operators.
We set 
\begin{subequations}\label{e:tau-eta}
\begin{align}
&\tau_i:=(t_{i+1}-t_i)/2\quad\text{for}\quad i\in\{1,\ldots,n-1\};\label{e:taui}\\
&\eta:=(\eta_1,\ldots,\eta_n)\in X\quad\text{where}\quad
	\left\{\begin{aligned}
	&\eta_1:=\tau_1;\quad
	\eta_n:=\tau_{n-1};\quad\text{and}\\
	&\eta_i:=\tau_{i-1}+\tau_{i}\quad\text{for}\quad
	i\in\{2,\ldots,n-1\}.
	\end{aligned}\right.\label{e:etai}
\end{align}
\end{subequations}

\subsection{Area between two linear splines}
\label{ss:Asps}
\begin{figure}[H]
\centering
\includegraphics{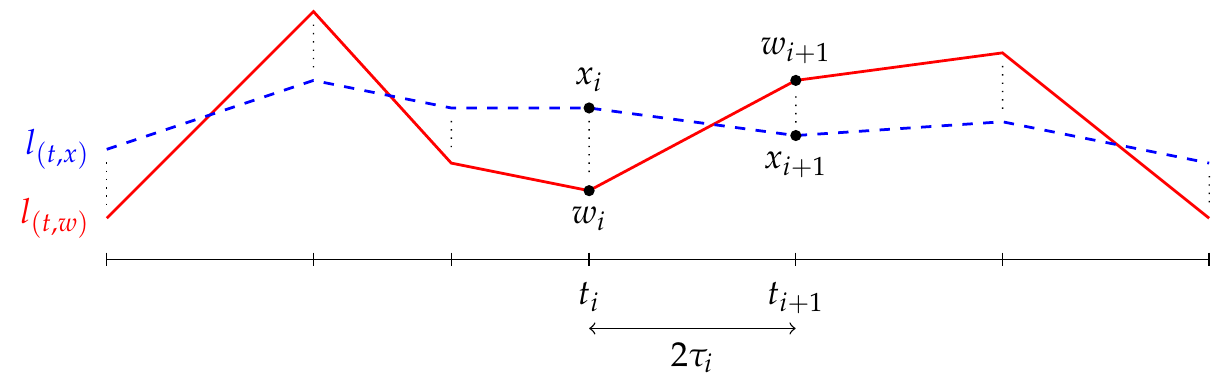}
\caption{Area between two linear splines}\label{f:1}
\end{figure}
Using Section~\ref{ss:geo-area} and Section~\ref{ss:challenging}, 
we estimate the area between the two line segments 
$[(t_i,x_i),(t_{i+1},x_{i+1})]$ and $[(t_i,w_i),(t_{i+1},w_{i+1})]$
(see Figure~\ref{f:1}) by 
\begin{equation}\label{e:0617d}
A_i(x_i,x_{i+1})=\tau_i\cdot f(x_i-w_i,x_{i+1}-w_{i+1}),
\end{equation}
where the value of $A_i$ depends on the norm $f$ as shown in the
following table:

\begin{center}
\begin{tabular}{l|c}
Norm $f$ & Value of $A_i(x_i,x_{i+1})$ \\[+2mm]
\hline
$\ell = \|\cdot\|_1$ &  upper estimate of the area\\[+2mm]
\hline
hexagonal stadium & upper estimate of the area\\[+2mm] 
\hline
stadium & exact area
\end{tabular}
\end{center}

Then the total (absolute) area between two linear 
splines $l_{(t,x)}$ and $l_{(t,w)}$ is estimated by 
\begin{equation}\label{e:0617b}
A(x)=\sum_{i=1}^{n-1}A_i(x_i,x_{i+1})=\sum_{i=1}^{n-1}
\tau_i\cdot f(x_i-w_i,x_{i+1}-w_{i+1}).
\end{equation}

Next, we will compute the proximity operators for the area estimate
$A(x)$. While we are able to explicitly compute the proximity operators for
each term of $A$ (see Theorem~\ref{t:pla-prx}), the overall sum
$A$ does not appear to admit a simple formula.
To deal with $A(x)$, we split it into two parts, 
\begin{subequations}\label{e:0616a}
\begin{equation}
\begin{aligned}
A_{\rm odd}(x)
&=\tau_1\cdot f(x_1-w_1,x_2-w_2)
+\tau_3\cdot f(x_3-w_3,x_4-w_4)+\cdots\\
&=\sum_{i\in\{1,\ldots,n-1\}\cap (1+2\NN)}
\tau_i\cdot f(x_i-w_i,x_{i+1}-w_{i+1}),
\end{aligned}
\end{equation}
and
\begin{equation}
\begin{aligned}
A_{\rm even}(x)
&=\tau_2\cdot f(x_2-w_2,x_3-w_3)
+\tau_4\cdot f(x_4-w_4,x_5-w_5)+\cdots\\
&=\sum_{i\in\{1,\ldots,n-1\}\cap (2\NN)}
\tau_i\cdot f(x_i-w_i,x_{i+1}-w_{i+1}),
\end{aligned}
\end{equation}
\end{subequations}
so that 
\begin{equation}\label{e:Ax}
A=A_{\rm odd}+A_{\rm even}. 
\end{equation}
As the functions in \eqref{e:0616a} are decoupled into independent
pairs of real variables, the proximity operators can be computed
in parallel.
Thus, grouping
\begin{equation}
X\ni(y_1,\ldots,y_n)= 
\Big((y_1,y_2),(y_3,y_4),\cdots\Big)=
\Big(y_1,(y_2,y_3),(y_4,y_5),\cdots\Big),
\end{equation}
and using Theorem~\ref{t:pla-prx}, we obtain the following
result:

\begin{theorem}[proximity operators for area estimations]
\label{t:0616b}
Let $A_i$ be given by \eqref{e:0617d} for every
$i\in\{1,\ldots,n-1\}$,
where $f$ is as in the table below. 
Let $A_{\rm odd}$ and $A_{\rm even}$ be defined by
\eqref{e:0616a}, 
let $\gamma\in\RPP$, let $\alpha\in\RPP$, and let $x\in X$.
Then the proximity operators
of $A_{\rm odd}$ and $A_{\rm even}$ are
\begin{subequations}\label{e:0616b}
\begin{align}\label{e:0616b1}
\prox_{\gamma(\alpha A_{\rm odd})}(x)=\big(\prox_{\gamma(\alpha A_1)}(x_1,x_2),\prox_{\gamma(\alpha A_3)}(x_3,x_4),\ldots\big),
\end{align}
where the last entry in \eqref{e:0616b1} is $x_n$ if $n$ is odd; 
\begin{align}\label{e:0616b3}
\prox_{\gamma(\alpha A_{\rm odd})^*}(x)=\big(\prox_{\gamma(\alpha A_1)^*}(x_1,x_2),\prox_{\gamma (\alpha A_3)^*}(x_3,x_4),\ldots\big),
\end{align}
where the last entry in \eqref{e:0616b3} is $0$ if $n$ is odd;
\begin{equation}\label{e:0616b2}
\prox_{\gamma(\alpha A_{\rm even})}(x)=\big(x_1,\prox_{\gamma(\alpha A_2)}(x_2,x_3),\prox_{\gamma(\alpha A_4)}(x_4,x_5),\ldots\big),
\end{equation}
where the last entry in \eqref{e:0616b2} is $x_n$ if $n$ is even;
\begin{equation}\label{e:0616b4}
\prox_{\gamma(\alpha A_{\rm even})^*}(x)=\big(0,\prox_{\gamma(\alpha A_2)^*}(x_2,x_3),\prox_{\gamma(\alpha A_4)^*}(x_4,x_5),\ldots\big),
\end{equation}
where the last entry in \eqref{e:0616b4} is $0$ if $n$ is even. 
In these formulas, 
\end{subequations}
\begin{subequations}
\begin{align}
&\prox_{\gamma(\alpha A_i)}(x_i,x_{i+1})=(x_i,x_{i+1})
-\gamma\alpha\tau_i \prox_{\uball_*}
(\tfrac{x_i- w_i}{\gamma\alpha\tau_i},
\tfrac{x_{i+1}- w_{i+1}}{\gamma\alpha\tau_i});\\
\text{and}\quad
&\prox_{\gamma(\alpha A_i)^*}(x_i,x_{i+1})=\tau_i \prox_{\uball_*}(\tfrac{x_i-\gamma w_i}{\alpha\tau_i},
\tfrac{x_{i+1}-\gamma w_{i+1}}{\alpha\tau_i}),
\end{align}
\end{subequations}
where $\uball_*$ is the dual unit ball of the norm $f$. 

\begin{tabular}{l|l|l}
{\rm Norm} $f$ & {\rm Formula for $f(z_1,z_2)$}
&{\rm Formula for $\prox_{\uball_*}$} \\[+2mm]
\hline
{\rm $\ell=\|\cdot\|_1$} & $|z_1|+|z_2|$.
& {\rm see \eqref{e:maxproj}} \\[+2mm]
\hline
{\rm hexagonal stadium}
& $\max\big\{|z_1|,|z_2|,|z_1+z_2|\big\}$
& {\rm see \eqref{e:50s1} or \eqref{e:50s3}} \\[+2mm] 
\hline
 & & \\[-4mm]
{\rm stadium}
& $\displaystyle \frac{z_1^2+z_2^2+2\max\{0,z_1z_2\}}{|z_1|+|z_2|}$
& {\rm see \eqref{e:stadiumdualproj}}
\end{tabular}

\end{theorem}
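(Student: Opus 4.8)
The plan is to exploit the block-separable structure of $A_{\rm odd}$ and $A_{\rm even}$ and to reduce everything to the planar formulae of Theorem~\ref{t:pla-prx}. First I would record the elementary separability principle for proximity operators: if a proper, lower semicontinuous, convex function $F\colon X\to\RX$ splits as a sum $F(x)=\sum_j F_j(x_{I_j})$ over a partition $\{I_j\}$ of the coordinate indices, then the defining minimization $\min_y\big(\tfrac12\|x-y\|^2+F(y)\big)$ decouples across the blocks $I_j$, so that $\prox_F(x)=\big(\prox_{F_j}(x_{I_j})\big)_j$. Applied to $A_{\rm odd}$, whose summands $A_1(x_1,x_2),A_3(x_3,x_4),\ldots$ depend on the pairwise disjoint pairs $(x_1,x_2),(x_3,x_4),\ldots$, this immediately yields \eqref{e:0616b1}; the same reasoning gives \eqref{e:0616b2} for $A_{\rm even}$, whose blocks are $(x_2,x_3),(x_4,x_5),\ldots$ and which leaves $x_1$ (and, when $n$ is even, $x_n$) untouched.

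Next I would treat the Fenchel conjugates. The key observation is that when $A_{\rm even}$ (respectively $A_{\rm odd}$ for odd $n$) does not depend on a coordinate $x_k$, the partial supremum in the definition of the conjugate forces the corresponding dual coordinate to vanish: $F$ independent of $x_k$ gives $F^*=\iota_{\{0\}}$ in the $k$-th variable plus a conjugate in the remaining variables. Since $\prox_{\gamma\iota_{\{0\}}}\equiv 0$, this explains why the omitted-coordinate entries in \eqref{e:0616b3} and \eqref{e:0616b4} equal $0$, while the conjugate inherits the same block-separable decomposition across the active pairs.

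It then remains to evaluate the per-pair operators $\prox_{\gamma(\alpha A_i)}$ and $\prox_{\gamma(\alpha A_i)^*}$. Since $A_i(x_i,x_{i+1})=\tau_i\,f(x_i-w_i,x_{i+1}-w_{i+1})$ by \eqref{e:0617d}, each term is exactly of the form $\alpha' f(\cdot-w^{(i)})$ treated in Lemma~\ref{l:fuckingback1} and Theorem~\ref{t:pla-prx}, with shift $w^{(i)}=(w_i,w_{i+1})$ and with the scaling constant $\alpha$ of those results replaced by $\alpha\tau_i$. Substituting $\alpha\mapsto\alpha\tau_i$ and $w\mapsto w^{(i)}$ into the two displayed planar formulae produces precisely the two closed forms claimed for $\prox_{\gamma(\alpha A_i)}$ and $\prox_{\gamma(\alpha A_i)^*}$ in terms of $\prox_{\uball_*}$. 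The entries of $\prox_{\uball_*}$ for the three choices of $f$ are then read off from the concluding table, i.e.\ from \eqref{e:maxproj}, \eqref{e:50s1}--\eqref{e:50s3}, and \eqref{e:stadiumdualproj}.

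The computations are essentially mechanical, so I do not expect a genuine analytic obstacle; the one point requiring care is the parity bookkeeping — tracking which coordinate is left fixed (equal to $x_n$) by the primal operator and set to $0$ by the conjugate operator according as $n$ is even or odd — together with the correct absorption of $\tau_i$ into the scaling constant. The main conceptual content is simply the separability principle combined with the conjugate-of-an-independent-coordinate identity; once these are in place, the theorem follows by direct substitution into Theorem~\ref{t:pla-prx}.
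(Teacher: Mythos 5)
Your proposal is correct and takes essentially the same route as the paper: the paper supplies no formal proof at all, only the pre-theorem remark that the summands of $A_{\rm odd}$ and $A_{\rm even}$ decouple into independent pairs of variables together with an appeal to Theorem~\ref{t:pla-prx}, which is exactly your separability principle plus substitution, and your conjugate-of-an-independent-coordinate observation (producing the $0$ entries via $\prox_{\gamma\iota_{\{0\}}}\equiv 0$) merely makes explicit what the paper leaves implicit. One caveat: carrying out your substitution $\alpha\mapsto\alpha\tau_i$, $w\mapsto(w_i,w_{i+1})$ in Theorem~\ref{t:pla-prx} actually yields
\begin{equation*}
\prox_{\gamma(\alpha A_i)^*}(x_i,x_{i+1})
=\alpha\tau_i\,\prox_{\uball_*}\Big(\tfrac{x_i-\gamma w_i}{\alpha\tau_i},\tfrac{x_{i+1}-\gamma w_{i+1}}{\alpha\tau_i}\Big),
\end{equation*}
i.e.\ with leading factor $\alpha\tau_i$ rather than the $\tau_i$ printed in the theorem, so your assertion that the substitution reproduces the stated formulae ``precisely'' glosses over what appears to be a typographical omission of $\alpha$ in the paper's statement (the two agree only when $\alpha=1$); your method, carried through carefully, is the correct one.
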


It turns out that if $f=\ell=\|\cdot\|_1$ 
is used for the estimate $A(x)$, then the proximity operators become
simpler since all variables $x_i$ appear separately:

\begin{theorem}[proximity operators for $\ell=\|\cdot\|_1$ area estimation]
Let $l_{(t,x)}$ and $l_{(t,w)}$ be linear splines (see
\eqref{e:l(t,x)}), let
\begin{equation}
A(x)=\sum_{i=1}^{n-1}A_i(x_i,x_{i+1})
=\sum_{i=1}^{n-1}
\tau_i\cdot (|x_i-w_i|+|x_{i+1}-w_{i+1}|)
=\sum_{i=1}^{n}
\eta_i|x_i-w_i|
\end{equation}
be the $\ell=\|\cdot\|_1$ estimation of the area between them
(see \eqref{e:tau-eta}),
let $\gamma\in\RPP$, and let $\alpha\in\RPP$.
Then 
\begin{subequations}
\begin{equation}
\big(\prox_{\gamma(\alpha A)}(x)\big)_i
	= \begin{cases}
	x_i+\gamma(\alpha\eta_i)\displaystyle \frac{w_i-x_i}{|w_i-x_i|}, &\text{if
	$|w_i-x_i|>\gamma\alpha\eta_i$;}\\
	w_i, &\text{otherwise,}
	\end{cases}
\end{equation}	
and
\begin{equation}
	\big(\prox_{\gamma(\alpha A)^*}(x)\big)_i
	= \begin{cases}
	(\alpha\eta_i) \displaystyle \frac{x_i-\gamma w_i}{|x_i-\gamma w_i|}, &\text{if
	$|x_i-\gamma w_i|>\alpha\eta_i$;}\\
	x_i-\gamma w_i, &\text{otherwise.}
	\end{cases}
\end{equation}
\end{subequations}
\end{theorem}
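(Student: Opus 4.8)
The plan is to exploit the \emph{separability} of the $\ell^1$-area estimate. Once $A$ is written in the collapsed form $\sum_{i=1}^{n}\eta_i\abs{x_i-w_i}$, every summand depends on a single coordinate, so the proximity operator splits coordinatewise and the whole problem reduces to a family of one-dimensional computations that are already available from Case~3 of Theorem~\ref{t:comm-prxs}.

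First I would verify the rewriting $\sum_{i=1}^{n-1}\tau_i\bk{\abs{x_i-w_i}+\abs{x_{i+1}-w_{i+1}}}=\sum_{i=1}^{n}\eta_i\abs{x_i-w_i}$ by collecting, for each fixed index $i$, the total coefficient of $\abs{x_i-w_i}$. The term $\abs{x_i-w_i}$ occurs as the first summand of $A_i$ (contributing $\tau_i$) and as the second summand of $A_{i-1}$ (contributing $\tau_{i-1}$); at the two endpoints only one of these is present. This reproduces exactly the weights $\eta_1=\tau_1$, $\eta_n=\tau_{n-1}$, and $\eta_i=\tau_{i-1}+\tau_i$ for interior $i$ from \eqref{e:etai}.

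Next, since $x\mapsto\gamma\alpha A(x)=\sum_{i=1}^{n}\gamma\alpha\eta_i\abs{x_i-w_i}$ is a sum of functions each depending on a single coordinate, the proximity operator separates, giving $\bk{\prox_{\gamma(\alpha A)}(x)}_i=\prox_{\gamma\alpha\eta_i\abs{\cdot-w_i}}(x_i)$; the same holds for the Fenchel conjugate, using that the conjugate of a separable sum is the separable sum of the conjugates, so that $\bk{\prox_{\gamma(\alpha A)^*}(x)}_i=\prox_{\gamma(\alpha\eta_i\abs{\cdot-w_i})^*}(x_i)$. For each coordinate I would then invoke Case~3 of Theorem~\ref{t:comm-prxs} with $X=\RR$, the scalar $\alpha$ there replaced by $\alpha\eta_i$, and $w$ replaced by $w_i$; substituting $\norm{\cdot}=\abs{\cdot}$ yields precisely the stated shrinkage formulas for both $\prox_{\gamma(\alpha A)}$ and $\prox_{\gamma(\alpha A)^*}$.

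There is no genuine obstacle here: the result is a direct corollary of separability together with the one-dimensional absolute-value proximity formula. The only point demanding a little care is the bookkeeping of the three scalars $\gamma$, $\alpha$, and $\eta_i$, ensuring that the threshold $\gamma\alpha\eta_i$ (respectively $\alpha\eta_i$ for the conjugate) and the accompanying step are assembled correctly once the per-coordinate weight $\eta_i$ is folded into the role of the positive homogeneity scalar in Case~3.
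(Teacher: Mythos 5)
Your proposal is correct and follows essentially the same route the paper intends: the paper states this theorem without a separate proof precisely because, as its preceding remark notes, the weighted $\ell^1$ form $\sum_{i=1}^{n}\eta_i|x_i-w_i|$ decouples the coordinates, so the result is immediate from coordinatewise separability of the proximity operator (and of Fenchel conjugation) together with the one-dimensional case of Theorem~\ref{t:comm-prxs} (Case~3 with $X=\RR$, exactly as in the paper's own proof of its Case~4). Your explicit bookkeeping of the weights $\eta_i$ and of the scalars $\gamma$, $\alpha$, $\eta_i$ in the thresholds is a correct and welcome elaboration of what the paper leaves implicit.
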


\subsection{Signed area between two linear splines}
\label{ss:Ssps}

Taking into account the signed area between two line segments (see
Section~\ref{ss:signed} and Figure~\ref{f:1b}), 
we obtain the following function of $x$ for the
signed area between two linear splines $l_{(t,x)}$ and
$l_{(t,w)}$: 
\begin{equation}\label{e:Sx}
S\colon X\to\RR\colon 
x\mapsto \sum_{i=1}^{n-1}\tau_i\big((x_i-w_i)+(x_{i+1}-w_{i+1})\big)
=\sum_{i=1}^{n}\eta_i(x_i-w_i)
=\scal{\eta}{x-w},
\end{equation}
where $\tau_i$ and $\eta$ are given by \eqref{e:tau-eta}.
\begin{figure}[H]
\centering
\includegraphics{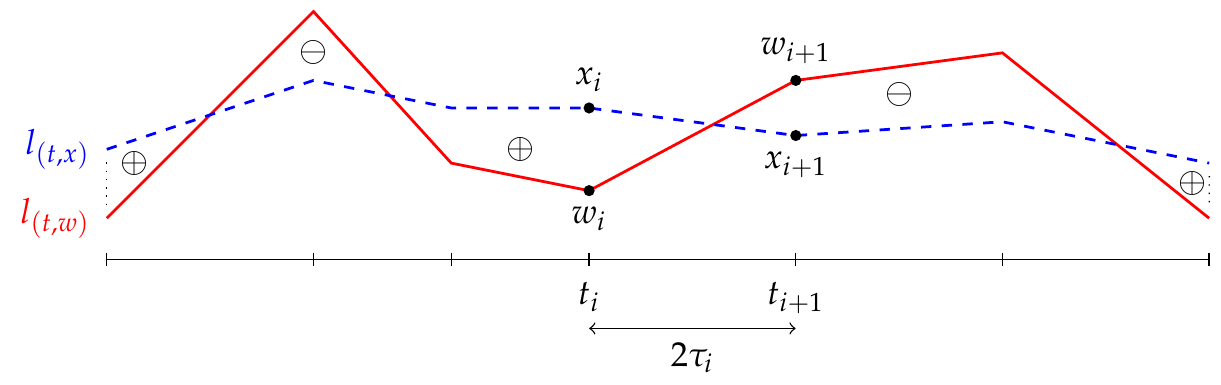}
\caption{Signed area between two linear splines}\label{f:1b}
\end{figure}

Because the signed area function $S$ of \eqref{e:Sx} is simple,
we are able to directly compute the corresponding
proximity operators.
In fact, the following result follows readily 
from Case~5 of Theorem~\ref{t:comm-prxs}:

\begin{theorem}[proximity operators for $|S|$]
Let $l_{(t,x)}$ and $l_{(t,w})$ be two linear splines (see
\eqref{e:l(t,x)}, and let $S$
be given by \eqref{e:Sx}, i.e., the function corresponding to
the signed area between the splines. 
Let $\gamma\in\RPP$ and let $\alpha\in\RPP$.
Then 
\begin{subequations}
\begin{equation}
\prox_{\gamma|\alpha S|}(x)
=x-(\gamma\alpha)\prox_{[-1,1]}
\Big(\tfrac{\scal{\eta}{x-w}}{\gamma\alpha\|\eta\|^2}\Big)\eta
\end{equation}
and
\begin{equation}
\prox_{\gamma|\alpha S|^*}(x)
=\alpha\prox_{[-1,1]}\Big(\tfrac{\scal{\eta}{x-\gamma w}}{\alpha\|\eta\|^2}\Big)\eta.
\end{equation}
\end{subequations}
\end{theorem}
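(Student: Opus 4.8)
The plan is to observe that $|\alpha S|$ is nothing but a concrete instance of the function analyzed in Case~5 of Theorem~\ref{t:comm-prxs}, so that both formulae drop out by a single substitution. By \eqref{e:Sx}, the signed-area function is the \emph{linear} functional $S(x)=\scal{\eta}{x-w}$, where $\eta$ is the fixed vector from \eqref{e:tau-eta}. Since $\alpha\in\RPP$, we have $|\alpha S|(x)=\alpha\,|\scal{\eta}{x-w}|$, which matches the function $f(x)=\alpha|\scal{x^*}{x-w}|$ of Case~5 precisely when the linear functional is taken to be $x^*:=\eta$.

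First I would record this identification and note, in particular, that $\|x^*\|^2=\|\eta\|^2$ and $\scal{x^*}{\cdot}=\scal{\eta}{\cdot}$. Then I would substitute $x^*=\eta$ into the two closed-form expressions supplied by Case~5, namely $\prox_{\gamma f}(x)=x-(\gamma\alpha)\prox_{[-1,1]}\big(\scal{x^*}{x-w}/(\gamma\alpha\|x^*\|^2)\big)x^*$ and $\prox_{\gamma f^*}(x)=\alpha\prox_{[-1,1]}\big(\scal{x^*}{x-\gamma w}/(\alpha\|x^*\|^2)\big)x^*$. This replacement reproduces the two displayed formulae verbatim, upon identifying $\prox_{\gamma|\alpha S|}$ with $\prox_{\gamma f}$ and $\prox_{\gamma|\alpha S|^*}$ with $\prox_{\gamma f^*}$ for $f=|\alpha S|$.

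There is essentially no obstacle here: the entire content is the recognition that $S$ is affine with constant gradient $\eta$, so that $|\alpha S|$ is the absolute value of a scaled linear functional --- exactly the situation already resolved in Theorem~\ref{t:comm-prxs}. The only point worth stating explicitly is the harmless identity $|\alpha S|=\alpha|S|$, valid because $\alpha>0$; everything else is direct substitution.
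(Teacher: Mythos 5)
Your proposal is correct and coincides with the paper's own argument: the paper proves this theorem precisely by noting that it ``follows readily from Case~5 of Theorem~\ref{t:comm-prxs}'', i.e.\ by recognizing $|\alpha S|(x)=\alpha|\scal{\eta}{x-w}|$ as that case's function with $x^*:=\eta$ and substituting. Nothing further is needed.
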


\subsection{Cost functions related to areas in road design problems}

In road design problems, one assumes that the original (vertical) ground
profile is represented by the linear spline
$l_{(t,w)}$ (see \cite{BK13} for details). 
It is required to find a vector $x\in
C_1\cap\cdots C_6$ that is as ``close'' as possible to the vector
$w$. There are several ways to measure this closeness; of
particular interest are the following quantities:

\begin{itemize}
\sepp 
\item 
the \textbf{amount of earth work} (cut and fill) needed.
This amount can be interpreted as the absolute area $A(x)$ between
the two linear splines $l_{(t,x)}$ and $l_{(t,w)}$, which is given
by \eqref{e:Ax} or its polyhedral approximations. 
\item the final \textbf{cut-and-fill balance}. In practice, 
the soil obtained from cutting can be used later for filling.
Therefore, the engineer is also interested in minimizing the final
cut-and-fill balance. This amount is interpreted as the absolute
value of the signed area $S(x)$ (see \eqref{e:Sx}).
\end{itemize}
The measures may be combined by taking conical (i.e., positive
linear) combinations. 
Thus, the problem of interest is to 
\begin{empheq}[box=\mybluebox]{equation}\label{e:probAS}
{\rm Minimize}\quad \alpha A(x)+\beta |S|(x)
\quad\text{subject to}\quad
x\in C_1\cap\cdots\cap C_6,
\end{empheq}
where $A(x)$ is given by \eqref{e:Ax}, $S(x)$ is given by \eqref{e:Sx}, 
and $\alpha$ and $\beta$ are nonnegative weights. 


\section{Douglas--Rachford and Cyclic Intrepid Projections
algorithms}

\label{s:DR}

In this section we briefly review two algorithms we will employ
in numerical experiments. Recall that $X = \RR^N$ and let
$I$ be a nonempty finite set of indices. 

\subsection{Douglas--Rachford Algorithm (DR)}

Consider the problem
\begin{equation}
\label{e:minprob}
\text{minimize}\quad\sum_{i\in I}f_i(x)
\quad\text{subject to}\quad x\in X,
\end{equation}
where each $f_i$ are proper convex lower semicontinuous 
function on $X$. 
The Douglas--Rachford  algorithm, or simply ``DR'' solves 
\eqref{e:minprob} by operating in the product Hilbert space 
\begin{equation}
\bX:=X^I,
\end{equation}
with inner product $\scal{\bx}{\by} := \sum_{i\in I}
\scal{x_i}{y_i}$ for $\bx = (x_i)_{i\in I}$ and $\by =
(y_i)_{i\in I}$. 
Its precise formulation is as follows (see, e.g.,
\cite[Proposition~27.8]{BC}):

Initialize $\bx_0=(x_{0,i})_{i\in I} = (z,\ldots,z)\in\bX$,
where $z\in X$. 
Given $\bx_k\in\bX$, update via 
\begin{subequations}
\begin{align}
\overline{x}_k &:= \frac{1}{|I|}\sum_{i\in I} x_{k,i},\\
(\forall i\in I)\quad y_{k,i} &:= \prox_{\gamma f_i}(2x_{k,i}-\overline{x}_k),\\
(\forall i\in I)\quad x_{k+1,i} &:= x_{k,i} + y_{k,i} -\overline{x}_k,
\end{align}
\end{subequations}
to obtain $\bx_{k+1}$. 
Then the \emph{monitored sequence} $(\overline{x}_k)_{\kkk}$ converges to a 
solution of \eqref{e:minprob}. 

DR finds its roots in the field of differential equations 
\cite{DougRach}. The seminal work by Lions and Mercier
\cite{LioMer} broad to
light the much wider scope of this algorithm. 
Nowadays, there are several variants and numerous studies of 
DR.
We do not describe these variants here because the two modern ones
we experimented with (see \cite{BCH} and
\cite{BAC})\footnote{These variants also require computing
proximity operators of constant multiples of $f_i^*$; see the
previous sections for explicit formulas. We mention also that
these methods allow for great flexibility due to parameters that
can be specified by the user.}
performed similarly to the \emph{plain vanilla} DR. 

\subsection{Method of Cyclic Intrepid Projections (CycIP)}

To describe the method of cyclic intrepid projections,
which has its roots in \cite{Herman}, 
we first need to develop the notion of an intrepid projector.
Suppose that $Z$ is a nonempty closed convex subset of $X$ and
let $\beta\in\RPP$. 
Set $C := \menge{x\in X}{d_Z(x)\leq\beta} $. 
Then the corresponding \emph{intrepid projector} onto $C$ (with
respect to $Z$ and $\beta$) is defined by
\begin{equation}
Q_C\colon X\to X\colon x\mapsto
\begin{cases}
P_Zx, &\text{if $d_Z(x)\geq 2\beta$;}\\
x, &\text{if $d_Z(x)\leq\beta$;}\\
x + \big(\beta-d_Z(x)\big)\displaystyle \frac{x-P_Zx}{\beta}, &\text{otherwise.}
\end{cases}
\end{equation}
Consider the convex feasibility problem
\begin{equation}
\label{e:cfprob}
\text{find}\quad x\in C := \bigcap_{i\in I} C_i\neq\varnothing,
\end{equation}
where each $C_i$ is a nonempty closed convex subset of $X$.
Define $I_0$ by $i\in I_0$ if and only if $i\in I$ and $T_i :=
Q_{C_i}$ is an intrepid projector onto $C_i$; for $i\in I_1 :=
I\smallsetminus I_0$, we set $T_i := P_{C_i}$. 
Given $x_0\in X$, the \emph{method of cyclic intrepid projections
(CycIP)} generates
a sequence $(x_k)_\kkk$ in $X$ via
\begin{equation}
(\forall\kkk)\quad x_{k+1} =  \big(T_{m}T_{m-1}\cdots T_2T_1\big)x_k
\end{equation}
Then the \emph{monitored sequence} $(x_k)_\kkk$ converges to some point in $C$ (see
\cite[Theorem~14]{BIK13}). 

CycIP is just one of many projection methods for solving
\eqref{e:cfprob} (see \cite{SIREV}, \cite{Cegielski},
\cite{CenZen}, \cite{Comb97} and the references therein); 
however, CycIP performed very well in the context 
of road design (see \cite{BIK13} and \cite{BK13}).

\section{Numerical experiments}

\label{s:numerix}

We now return to the optimization problem \eqref{e:probAS}. 
In the context of road design and construction,
$\alpha$ is an averaged unit cost for excavation and embankment,
and $\beta$ is an averaged unit cost for hauling. The values for
$\alpha$ and $\beta$ change with soil types and vary by location;
however, setting $\alpha := 4$ and $\beta :=1$ is a reasonable
assignment based on actual handling cost. 

We will consider Douglas--Rachford algorithm to solve \eqref{e:probAS} with three different estimates of $A(x)$:
\begin{itemize}
\sepp 
\item DRsb: solve problem \eqref{e:probAS} where $A(x)$ is the {\em exact} earth work amount, i.e., using the {\em stadium norm}.

\item DRhb: solve problem \eqref{e:probAS} where $A(x)$ is the upper estimate of earth work amount using the {\em hexagonal stadium norm}.

\item DRlb: solve problem \eqref{e:probAS} where $A(x)$ is the upper estimate of earth work amount using $\ell=\|\cdot\|_1$.
\end{itemize}
Note that at the very least, the engineer must solve the road design feasibility problem 
\begin{equation}\label{e:feas}
{\rm find}\quad x\in C_1\cap\cdots\cap C_6.
\end{equation}
Thus, it is important and interesting to see how much earthwork
one can save by solving the optimization problem \eqref{e:probAS}
rather than the mere feasibility problem \eqref{e:feas}.
Indeed, solving \eqref{e:feas} has been extensively studied in
\cite{BK13}. In particular, the experiments in \cite{BK13} shows
that the method of cyclic intrepid projections (CycIP) is an extremely
fast and efficient algorithm for solving \eqref{e:feas} (for
further information on CycIP see \cite{BIK13}).
Therefore, we will compare the cost-efficiency of DRsb, DRhb, and DRlb to CycIP.

\subsection{Setup and stopping criteria}

Because the Douglas--Rachford algorithm requires the proximity
operators of all function involved, we write \eqref{e:probAS}
as 
\begin{empheq}[box=\mywhitebox]{equation}\label{e:probAAS}
{\rm minimize}\quad \alpha A_{\rm odd}(x)+\alpha A_{\rm even}(x)+\beta |S|(x)+\sum_{i=1}^6 \iota_{C_i}(x)
\quad\text{over}\quad x\in X 
\end{empheq}
in order to use the explicit proximity formulas given in Theorems~\ref{t:comm-prxs} and \ref{t:pla-prx}.

We run the four algorithms described above on $100$ test problems: 6 of which are obtained from real terrain data in British Columbia (Canada), and the rest of which is taken from the test problems in \cite[Section~6]{BK13}.
We set our tolerance at 
\begin{equation}
\ve:=5\cdot 10^{-3}.
\end{equation}
Since {CycIP} is an algorithm aimed at solving the underlying 
feasibility problem, we stop it as soon as a term of the 
monitored sequence
$(x_k)_\kkk$ satisfies\footnote{Recall that the max-norm is given by  $\|x\|_\infty:=\max\{|x_1|,\ldots,|x_n|\}$ for every $x=(x_1,\ldots,x_n)\in \RR^n$.}
\begin{equation}
\max_{i\in\{1,\ldots,6\}}\|x_k-P_{C_i}x_k\|_\infty<\ve. 
\end{equation}
For DRsb, DRhb, DRlb, the Douglas--Rachford-based optimization 
algorithms, we terminate when the first term $\overline{x}_k$ of the monitored sequence $(\overline{x}_k)_\kkk$ satisfies
\begin{equation}
\max_{i\in\{1,\ldots,6\}}\|\overline{x}_k-P_{C_i}\overline{x}_k\|_\infty<\ve
\quad\text{and}\quad
\|\overline{x}_k-\overline{x}_{k-1}\|_\infty<\ve. 
\end{equation}

\subsection{Cost savings}

Although DRsb, DRhb, and DRlb deal with different cost approximations, we are interested in comparing the {\em exact} earthwork cost: recall that given the ground profile $(t,w)$, the exact earthwork amount for a road design $(t,x)$ is
\begin{equation}
F(x):=\alpha A(x)+\beta|S|(x),
\end{equation}
where $A(x)$ is the exact area between two splines $l_{(t,x)}$ and $l_{(t,w)}$, and $S(x)$ is the signed area between these two splines (see Sections~\ref{ss:Asps} and \ref{ss:Ssps}).

For each problem, let $F_{\rm CycIP}$ and $F_{\rm DR}$ be the cost of the road designs obtained by CycIP and DR, respectively. Then the cost saving ratio is given by
\begin{equation}
\Delta_{\rm DR}:=\frac{F_{\rm CycIP}-F_{\rm DR}}{F_{\rm CycIP}}. 
\end{equation}
In the following table, we record the statistics for $\Delta_{\rm DRsb}$, $\Delta_{\rm DRhb}$, and $\Delta_{\rm DRlb}$.
\begin{table}[H]
\centering
\begin{tabular}{|c|c|c|c|c|c|c|c|}
\hline
& Min & 1\textsuperscript{st} Qrt. & Median &
3\textsuperscript{rd} Qrt. & Max & Mean & Std.dev.\\
\hline
$\Delta_{\rm DRsb}$ & $-0.11\%$ & 6.38\% & 12.4\% & 18.82\% & 73.58\% & 14.90\% & 13.91\%\\[+1mm]
\hline
$\Delta_{\rm DRhb}$ & $-0.49\%$ & 6.02\% & 11.96\% & 18.46\% & 72.23\% & 14.56\% & 13.75\%\\[+1mm]
\hline
$\Delta_{\rm DRlb}$ & $-0.12\%$ & 5.30\% & 11.41\% & 17.00\% & 72.73\% & 13.87\% & 13.19\%\\[+1mm]
\hline
\end{tabular}
\caption{Cost savings: DR vs. CycIP (higher is better)}
\end{table}

Theoretically, we expect the cost saving of every optimization algorithm to be nonnegative. However, we observe (small) negative savings by either DR algorithms in $8$ out of $100$ test problems. In fact, because of the $\ve$-tolerance in our stopping criteria, the DR algorithms might stop before attaining optimality.

\subsection{Performance profiles}

To compare the performance of the algorithms, 
we use \emph{performance profiles}\footnote{
For further information on performance profiles, 
we refer the reader to \cite{DM}.}:
for every $a\in\mathcal{A}$ and 
for every $p\in\mathcal{P}$, we set
\begin{equation}
r_{a,p} :=
\frac{k_{a,p}}{\min\menge{k_{a',p}}{a'\in\mathcal{A}}} \geq 1,
\end{equation}
where $k_{a,p}\in\{1,2,\ldots,k_{\max}\}$ is the number of iterations that
$a$ requires to solve $p$ and $k_{\max}$ is the maximum number of iterations
allowed for all algorithms. 
If $r_{a,p} = 1$, then $a$ uses the least number of iterations to
solve problem $p$. 
If $r_{a,p} > 1$, then $a$ requires $r_{a,p}$
times more iterations for $p$ than the algorithm that uses the least
number of iterations for $p$.
For each algorithm $a\in\mathcal{A}$, we plot the function
\begin{equation}
\rho_a\colon \mathbb{R}_+\to[0,1]\colon \kappa\mapsto 
\frac{\card\menge{p\in\mathcal{P}}{\log_2(r_{a,p})\leq\kappa}}{\card
\mathcal{P}},
\end{equation}
where ``$\card$'' denotes the cardinality of a set. 
Thus, $\rho_a(\kappa)$ is the percentage of problems that algorithm
$a$ solves within factor $2^\kappa$ of the best algorithms. 
Therefore, an algorithm $a\in\mathcal{A}$ 
is ``fast'' if $\rho_a(\kappa)$ is large for $\kappa$ small;
and $a$ is ``robust'' if $\rho_a(\kappa)$ is large for $\kappa$ large. 

The following figure shows the performance profiles for the three DR algorithms.

\begin{figure}[H]
\centering
\includegraphics[width=\textwidth]{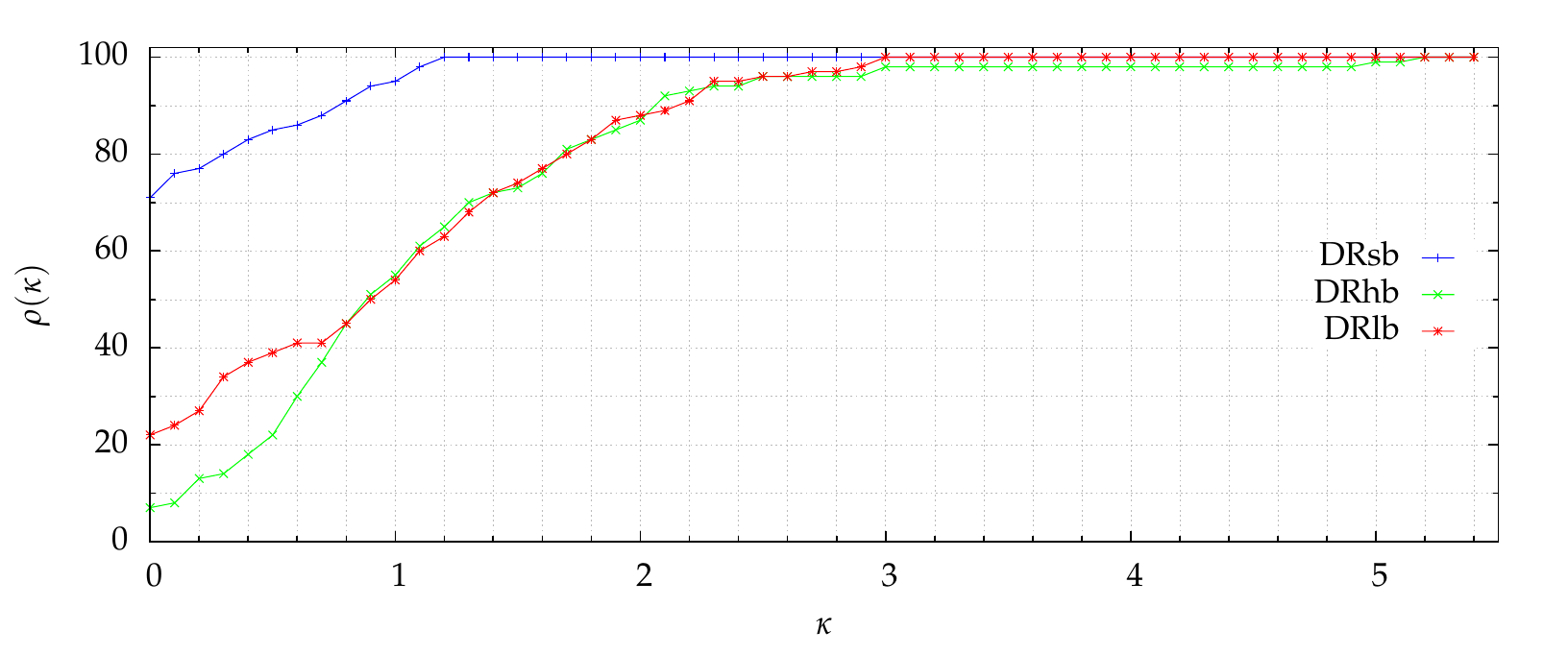}
\caption{Performance profiles (by number of iterations)}
\end{figure}

Note that, the performance profiles only reflect the number of
iterations needed, but they do not take into account the complexity
of proximity operator computations.

\subsection{Problems with real terrain data of BC}

In this section, we present the statistics for the 6 problems that
use real terrain data of British Columbia (Canada).
The problems represent 6 different design alternatives for a 
(hypothetical) high-speed bypass of the city of Merritt, which
would connect Highway 97C directly with the Coquihalla Highway.
The bypass starts at the intersection of the Okanagan Connector Hwy 97C with
the Princeton-Kamloops Hwy 5A, and follows westwards, joining the Coquihalla Hwy 5
near the Kane Valley and Coldwater Rd intersection.

As an example, one of the problems is to build a highway alternative that is
$27.805$ kilometer long and $10.4$ meter wide 
with a design speed of $110$ km/h and a maximum slope of $5\%$.
Starting from the original ground profile (the brown curve in Figure~\ref{fig:init}),
we select the points $(t_i,w_i)_{i\in\{1,\ldots,n\}}$ and create the
initial road design $l_{(t,w)}$ (which is the linear spline generated
by the chosen points).

\begin{figure}[H]
\centering
\includegraphics[width=\textwidth]{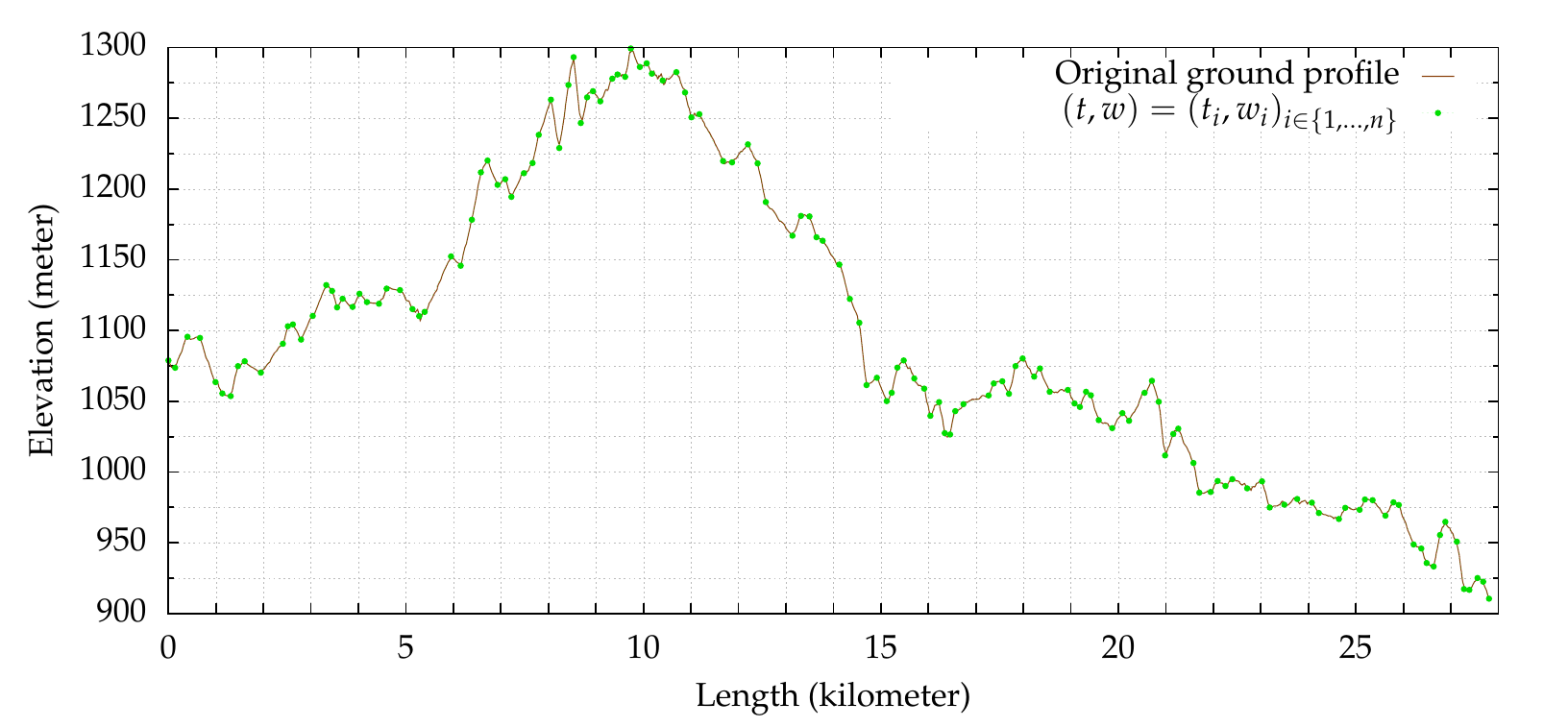}
\caption{Initial road design $l_{(t,w)}$ from the original ground
profile.}
\label{fig:init}
\end{figure}

\noindent
This initial design $l_{(t,w)}$ is usually infeasible, and we use
$w$ as the starting point for the algorithms. The following
two figures show the so-obtained road designs.

\begin{figure}[H]
\centering
\includegraphics[width=\textwidth]{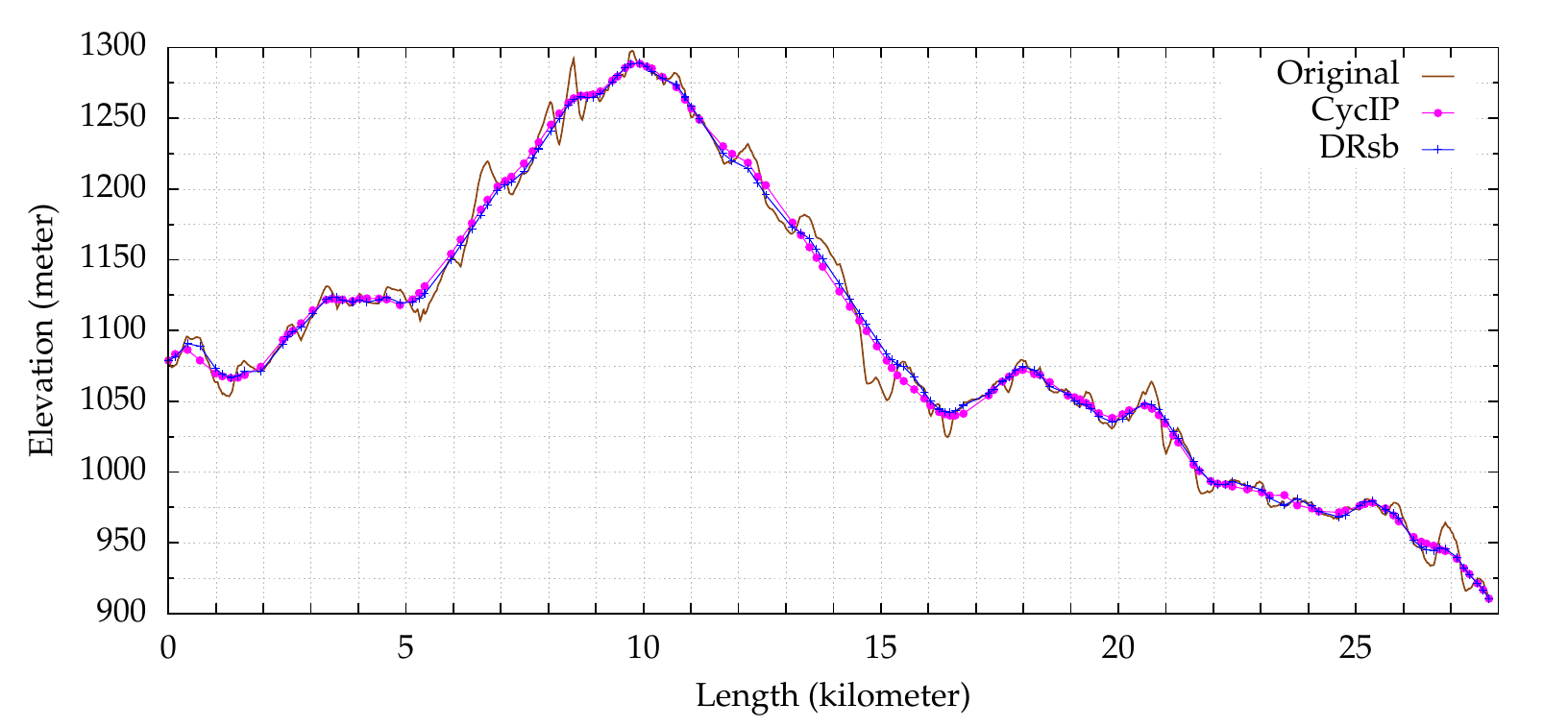}
\caption{Road designs obtained by CycIP and DRsb.}
\end{figure}

\begin{figure}[H]
\centering
\includegraphics[width=\textwidth]{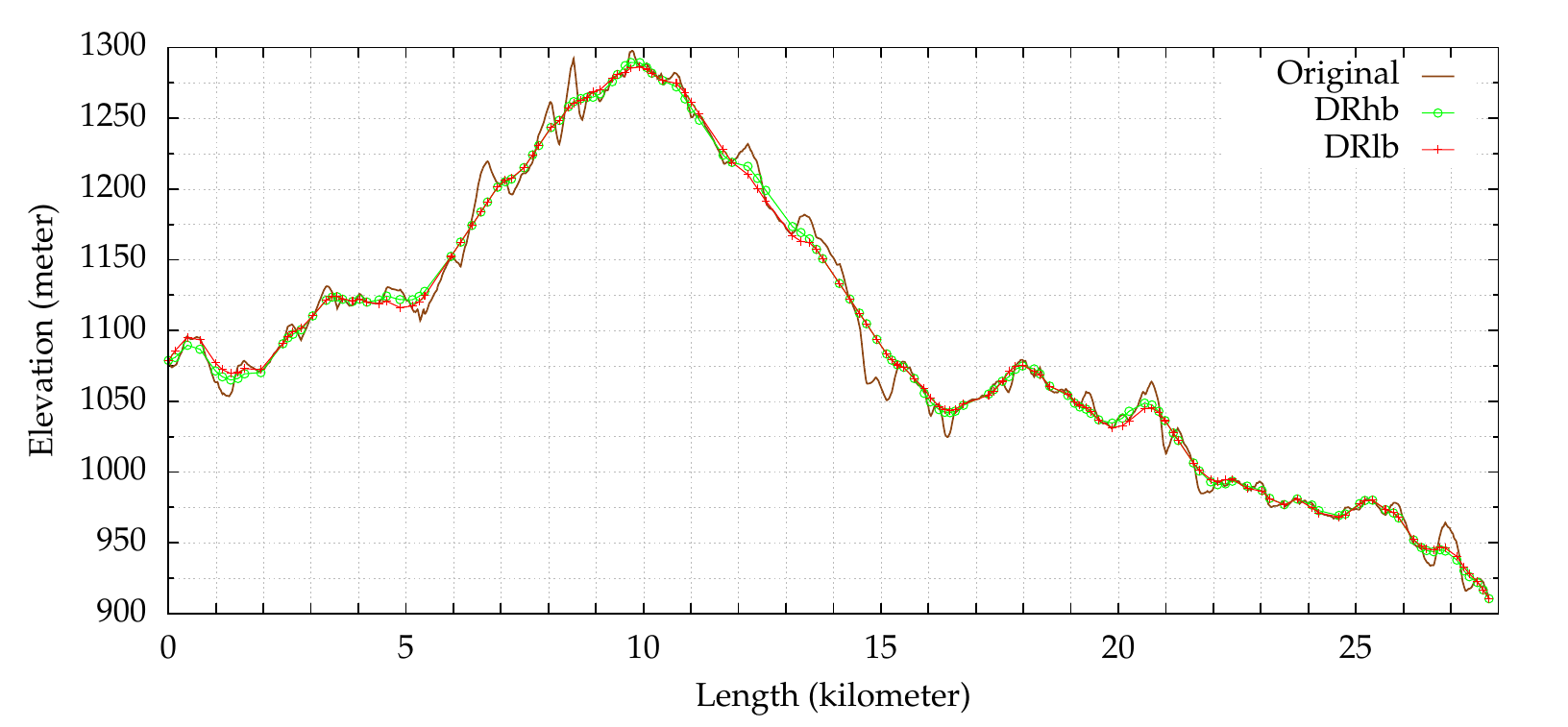}
\caption{Road designs obtained by DRhb and DRlb.}
\end{figure}

\noindent
These road designs are indeed different as seen in the two diagrams below. Figure \ref{fig:mass} presents a \emph{mass diagram}. The mass diagram is a plot of the cut and fill volumes along the road (where cuts are positive and fills are negative). Hence, a mass diagram that finishes closer to zero indicates a better balance between cut and fill. Figure \ref{fig:cummass} shows a cumulative mass diagram, where cut and fills are both taken as positive.

\begin{figure}[H]
\centering
\includegraphics[width=\textwidth]{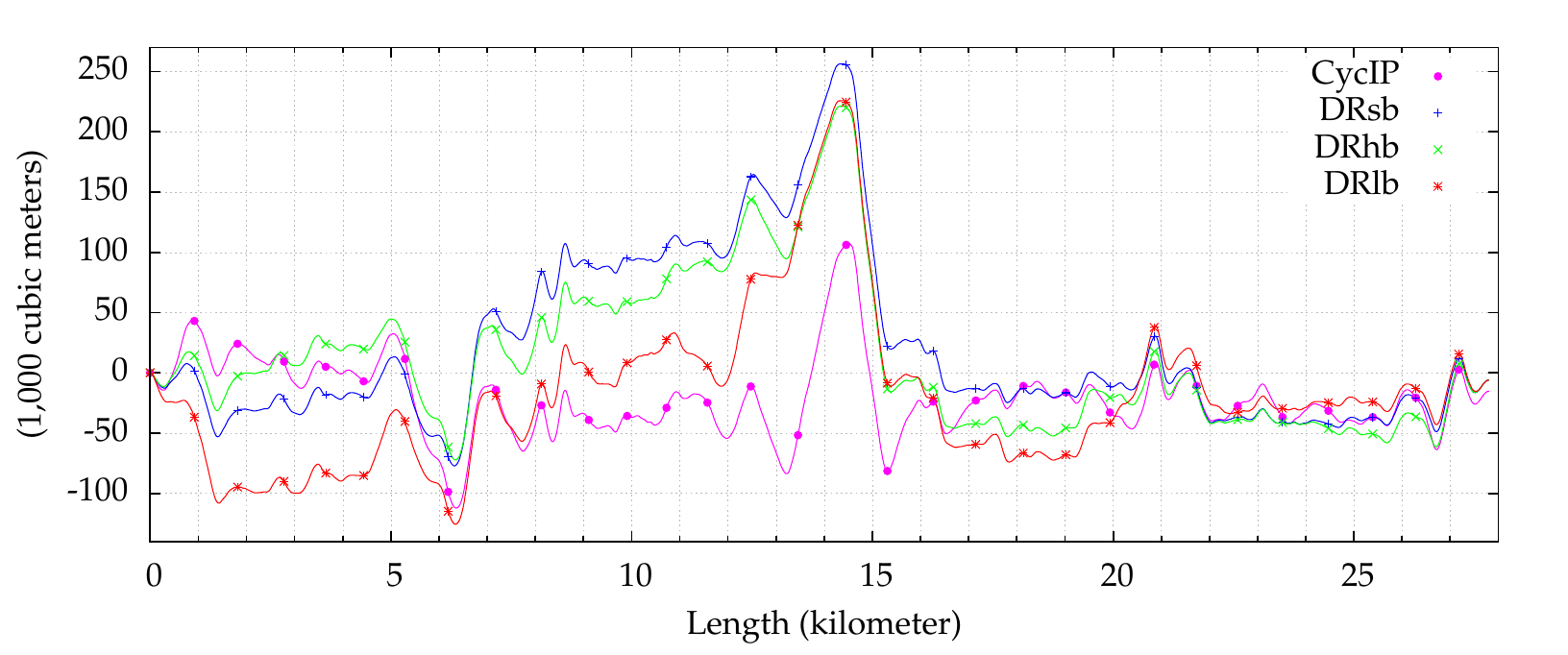}
\caption{Mass diagrams \label{fig:mass}}
\end{figure}

\begin{figure}[H]
\centering
\includegraphics[width=\textwidth]{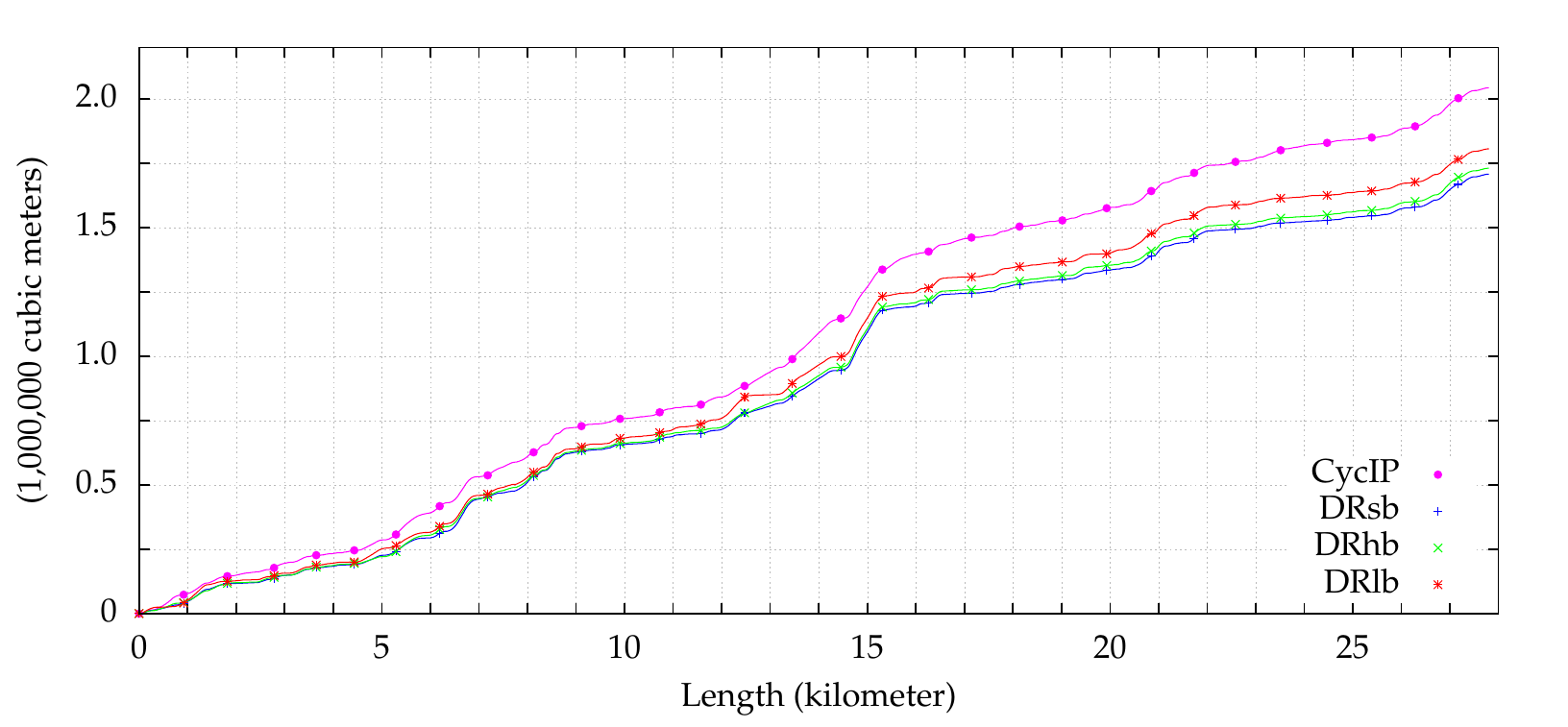}
\caption{Cumulative cut-and-fill amount \label{fig:cummass}}
\end{figure}
\noindent
We set the cost for cut-and-fill at \$5.23 per cubic meter and the
cost for handling the final cut-and-fill balance at \$1.31 per cubic
meter (notice that the ratio of these two costs is approximately
$4\negthinspace:\negthinspace1$). From the obtained data we
then record the cost for each road design in the next table.

\begin{table}[H]
\centering
\begin{tabular}{|c|c|c|c|c|}
\hline
Algorithms & Cut-and-fill (${\rm m}^3$) & Final balance (${\rm m}^3$) & Earthwork cost (\$) & Saving (\%)\\[+1mm]
\hline
CycIP& $2,043,188.4$ & $-15,273.0$ & $10,703,303$ & $0\%$\\[+1mm]
\hline
DRsb& $1,707,709.5$ & $-5,960.7$ & $8,936,992$ & $16.50\%$\\[+1mm]
\hline
DRhb& $1,730,857.5$ & $-5,996.8$ & $9,058,059$ & $15.37\%$\\[+1mm]
\hline
DRlb& $1,805,893.0$ & $-6,036.3$ & $9,450,468$ & $11.71\%$\\[+1mm]
\hline
\end{tabular}
\caption{Earthwork amount and cost saving}
\end{table}

\subsection{Conclusion}

The results suggest the following:
\begin{itemize}
\sepp 
\item Employing the cost function may reduce the construction cost
significantly. In our particular problem, DRsb can save approximately
$1.76$ million dollars ($16.5\%$), while the savings of DRhb and
DRlb are $1.64$ and $1.25$ millions ($15.37\%$ and $11.71\%$),
respectively.
\item Using the exact cost function (i.e., DRsb) may lead 
to a greater saving. 
\item Using the hexagonal approximation (i.e., DRhb) is 
beneficial for programming purpose while
also maintaining a good saving percentage.
\end{itemize}
The data for the other 5 problems listed next also support
our observations.
\begin{table}[H]
\centering
\begin{tabular}{|c|c|c|c|c|c|}
\hline
Algorithms & Prob.~1& Prob.~2& Prob.~3& Prob.~4& Prob.~5\\[+1mm]
\hline
DRsb & $7.05\%$ & $10.35\%$ & $8.88\%$ & $18.96\%$ & $12.81\%$\\[+1mm]
\hline
DRhb & $7.00\%$ & $10.41\%$ & $8.49\%$ & $18.17\%$ & $12.45\%$\\[+1mm]
\hline
DRlb & $6.88\%$ & $6.35\%$ & $7.7\%$ & $16.0\%$ & $10.04\%$\\[+1mm]
\hline
\end{tabular}
\caption{Cost savings over CycIP}
\end{table}

In summary, the experiments support our belief that the road design
optimization problem can be efficiently solved by employing
variants of the Douglas--Rachford algorithm. 
Future work may concentrate on refining the model and on testing
the algorithms on large-scale data using graphics processing
units.

\section*{Acknowledgement}

HHB was partially supported by the Natural Sciences and Engineering Research Council of Canada and by the Canada Research Chair Program. HMP was partially supported by an NSERC accelerator grant of HHB.
The tables and figures in this paper were obtained with the help of \texttt{Julia} (see~\cite{julia}) and \texttt{Gnuplot} (see~\cite{gnuplot}).

\bibliographystyle{plain}

\end{document}